\documentclass[12pt, A4paper, oneside]{article}
\setlength{\topmargin}{0in} \setlength{\oddsidemargin}{0in}
\setlength{\evensidemargin}{0in} \setlength{\headheight}{0in}
\setlength{\headsep}{0in} \setlength{\textwidth}{6.5in}
\setlength{\textheight}{9in}
\usepackage{amsthm,amsmath,amssymb,amscd,verbatim,epsfig,verbatim}
\usepackage{enumitem,multirow}
\usepackage{graphicx}
\usepackage{psfrag}
\theoremstyle{plain}
\newtheorem{thm}{Theorem}[section]
\newtheorem{lem}{Lemma}[section]

\newtheorem{rem}{Remark}[section]

\numberwithin{equation}{section}


\begin{document}

\fontsize{12}{25pt}\selectfont

\title{Optimal selection of  the $k$-th best candidate}

\author{
   Yi-Shen Lin\thanks{Institute of Statistical Science, Academia Sinica, Taipei 115, Taiwan, R.O.C. Email address: yslin@stat.sinica.edu.tw},\;\; Shoou-Ren Hsiau\thanks{Department of Mathematics, National Changhua University of Education, No. 1, Jin-De Rd., Changhua 500, Taiwan, R.O.C. Email address: srhsiau@cc.ncue.edu.tw},\; and Yi-Ching Yao\thanks{Institute of Statistical Science, Academia Sinica, Taipei 115, Taiwan, R.O.C. Email address: yao@stat.sinica.edu.tw}
    \date{}
    } \maketitle
\centerline {December 4, 2017}
\begin{abstract}
\fontsize{12}{18pt}\selectfont
In the subject of optimal stopping, the classical secretary problem is concerned with optimally selecting the best of $n$ candidates
when their relative ranks are observed sequentially. This problem has been extended to optimally selecting the $k$-th best candidate
for  $k\ge 2$. While the optimal stopping rule for $k=1,2$ (and all $n\ge 2$) is known to be of threshold type (involving one threshold),
we solve the case $k=3$ (and  all $n\ge 3$) by deriving an explicit optimal stopping rule that involves two thresholds. We also prove
several inequalities for $p(k,n)$, the maximum probability of selecting the $k$-th best of $n$ candidates. It is shown
that (i) $p(1,n)=p(n,n)>p(k,n)$ for $1<k<n$, (ii) $p(k,n)\ge p(k,n+1)$, (iii) $p(k,n)\ge p(k+1,n+1)$,
and (iv) $p(k,\infty):=\lim_{n\to \infty} p(k,n)$ is decreasing in $k$.
\end{abstract}
\emph{Keywords:} secretary problem; best choice; backward induction; optimal stopping.
\begin{tabbing}
2010 Mathematics Subject Classification:\;\=Primary 60G40\\
\>Secondary 62L15
\end{tabbing}

\fontsize{12}{20pt}\selectfont
\section{Introduction}
\hspace*{18pt}The classical secretary problem (also known as the best choice problem) has been extensively studied in the literature on optimal stopping, which is usually described as follows. There are $n$ (fixed) candidates to be interviewed sequentially in  random order for one secretarial position. It is assumed that these candidates can be ranked linearly without ties by a manager (rank 1 being the best). Upon interviewing a candidate, the manager is only able to observe the candidate's (relative) rank among those that have been interviewed so far.
The manager then  must decide whether to accept the present candidate (and stop interviewing) or to reject the candidate (and continue
 interviewing). No recall is allowed. The object is to maximize the probability of selecting the best candidate. More precisely, let $R_j$, $j=1,2,\ldots,n$, be the absolute rank of the $j$-th candidate
such that $(R_1,\dots,R_n)=\sigma_n$ with probability $1/n!$ for every permutation $\sigma_n$ of $(1,2,\dots,n)$.
Define $X_j=|\{1\le i \le j: R_i \le R_j\}|$, the relative rank of the $j$-th candidate among the first $j$ candidates.
It is desired to find a stopping rule $\tau_{1,n}\in \mathcal{M}_n$ such that
$P(R_{\tau_{1,n}}=1)=\sup_{\tau\in \mathcal{M}_n}P(R_{\tau}=1)$
where $\mathcal{M}_n$ denotes the set of all  stopping rules adapted to the filtration $\{\mathcal{F}_j\}$,
$\mathcal{F}_j$ being  the $\sigma$-algebra generated by $X_1,X_2,\dots,X_j$. It is well known ({\it cf.} Lindley \cite{ref10}) that the optimal stopping rule $\tau_{1,n}$ is of threshold type given by
$\tau_{1,n}=\min\{r_n\le j \le n: X_j=1\}$ where  $\min \emptyset:=n$ and the threshold
 $r_n:=\min\{j\ge1:\;\sum_{i=j+1}^n\frac{1}{i-1}\le1\}$.
 Moreover, the maximum probability of selecting the best candidate (under $\tau_{1,n}$) is
$p(1,n):=\frac{r_n-1}{n}\sum_{i=r_n}^n\frac{1}{i-1}$,
which converges as $n \to \infty$ to $p(1,\infty):=1/e=\lim_{n \to \infty} r_n/n$.

A great many interesting variants of the secretary problem have been formulated and solved in the literature
({\it cf.} the review papers by Ferguson \cite{ref4} and Freeman \cite{ref6} and  Samuels \cite{ref19}), most of which are concerned with optimally
selecting the best candidate or one of the $k$ best candidates. In contrast, only a few papers ({\it cf.} Rose \cite{ref16}, Szajowski \cite{ref26}
and Vanderbei \cite{ref21}) considered and solved the problem of optimally selecting the second best candidate.
(According to Vanderbei \cite{ref21}, in 1980, E.B. Dynkin proposed this problem to him with the motivating story that ``We are
trying to hire a postdoc and we are confident that the best candidate will receive and accept an offer from Harvard.'' Thus Vanderbei \cite{ref21}
refers to the problem as the postdoc variant of the secretary problem.)
These authors showed that the optimal stopping rule
$\tau_{2,n}$ is also of threshold type given by $\tau_{2,n}=\min\{r_n' \le j \le n: X_j=2\}$ with $r_n'=\lceil\frac{n+1}{2}\rceil$
(the smallest integer not less than $\frac{n+1}{2}$), which attains
the maximum probability of selecting the second best candidate
$$p(2,n):=P(R_{\tau_{2,n}}=2)=\sup_{\tau \in \mathcal{M}_n} P(R_{\tau}=2)=\frac{(r_n'-1)(n-r_n'+1)}{n(n-1)}.$$
Note that $p(2,\infty)=\lim_{n \to \infty} p(2,n)=1/4<1/e=p(1,\infty)$.

In this paper, we consider the problem of optimally selecting the $k$-th best candidate for general $k$.
Let $p(k,n):=\sup_{\tau \in \mathcal{M}_n} P(R_{\tau}=k)$, the maximum probability of selecting
the $k$-th best of $n$ candidates.   Szajowski \cite{ref26}  derived the asymptotic solutions as $n \to \infty$ for $k=3,4,5$.
Rose \cite{ref24} dealt with the case $k=(n+1)/2$ for odd $n$, which was called the  median problem and suggested by M. DeGroot
with the motivation of selecting a candidate representative of the entire sequence. (The  candidate of rank $k=(n+1)/2$ is, in some sense, representative of
all candidates.)  In the next section, we solve
the case $k=3$ for all finite $n\ge 3$ by showing ({\it cf.} Theorem 2.1) that the stopping rule
$\tau_{3,n}=\min\{a_n \le j \le n: X_j=2\} \wedge \min\{b_n \le j \le n: X_j=3\}$ attains the maximum probability $P(R_{\tau_{3,n}}=3)=p(3,n)$
for $n\ge 3$, where $x \wedge y:=\min\{x,y\}$ and the two thresholds $a_n<b_n$ are given in (\ref{v4e4}) and (\ref{v4e1}), respectively. In Section 3, we prove ({\it cf.} Theorems 3.1 and 3.2)
that (i) $p(1,n)=p(n,n)> p(k,n)$ for $1<k<n$,
(ii)  $p(k,n)\ge p(k,n+1)$, (iii) $p(k,n)\ge p(k+1,n+1)$, and  (iv) $p(k,\infty):=\lim_{n\to \infty} p(k,n)$ is decreasing in $k$. It is also noted
({\it cf.} Remark 3.1) that  the inequality
$p(k,n)\ge p(k+1,n)$ occasionally fails to hold for $k$ close to (but less than) $\lceil\frac{n}{2}\rceil$.
Furthermore, we extend  the result $p(1,n)=p(n,n)>p(k,n)$ for $1<k<n$
to the setting where the goal is to select a candidate whose absolute rank belongs to a prescribed subset $\Gamma$ of $\{1,\dots,n\}$
with $|\Gamma|=c \;(1\le c<n)$ ({\it cf.} Suchwalko and Szajowski \cite{ref25}). It is shown ({\it cf.} Theorem 3.3) that the probability of optimally selecting a candidate whose absolute rank belongs to $\Gamma$
is maximized when $\Gamma=\{1,\dots,c\}$ or $\Gamma=\{n-c+1,\dots,n\}$.
The proofs of several technical lemmas are
relegated to Section 4. Section 5 contains a computer program in Mathematica for verification of Theorem \ref{st4.1} for $3\le n\le 31$. It should be remarked that the optimal stopping rule is not necessarily unique. For example, a slight modification $\tau_{2,n}'$ of the optimal stopping rule $\tau_{2,n}$ also attains the maximum probability $p(2,n)$ where $\tau_{2,n}'\ge r_n'-1$ is given by $\tau_{2,n}'=r_n'-1$ if $X_{r_n'-1}=1$ and $\tau_{2,n}'=\tau_{2,n}$ otherwise.
The uniqueness issue of the optimal stopping rule is not addressed in this paper.

\section{Maximizing the probability of selecting the $k$-th best candidate with $k=3$}
\hspace*{18pt} We adopt the setup and notations in Ferguson \cite[Chapter 2]{ref5}. As defined in Section 1, $X_j$ is
the relative rank of the $j$-th candidate among the first $j$ candidates and $R_j$ is the absolute rank. Given $X_1=x_1,X_2=x_2,\dots,X_j=x_j$, $1\le j\le n$, let $y_j(x_1,x_2,\dots,x_j)$ be the return for stopping at stage $j$ (i.e. accepting the $j$-th candidate) and $V_j(x_1,x_2,\dots,x_j)$ the maximum return by optimally stopping from stage $j$ onwards. In other words, $y_j(x_1,x_2,\dots,x_j)$ is the conditional probability of $R_j=k$ (given $X_i=x_i$, $1\le i\le j$), which defines the reward function for the stopping problem of optimally selecting the $k$-th best candidate. Given $X_i=x_i$, $1\le i\le j$, $V_j(x_1,x_2,\dots,x_j)$ is the (maximum) expected reward by optimally stopping from stage $j$ onwards. Then
$V_n(x_1,x_2,\dots,x_n)=y_n(x_1,x_2,\dots,x_n)$, and
\begin{equation}\label{se2.1}
V_j\left(x_1,\dots,x_j\right)=\max\left\{y_j(x_1,\dots,x_j),E\left(V_{j+1}\left(x_1,\dots,x_j,X_{j+1}\right)\Big|X_1=x_1,\dots,X_j=x_j\right)\right\},
\end{equation}
for $j=n-1,n-2,\dots,1$. Given $X_i=x_i, i=1,\dots,j$, it is optimal to stop at stage $j$ if $V_j\left(x_1,x_2,\dots,x_j\right)=y_j(x_1,x_2,\dots,x_j)$ and to continue otherwise. The (optimal) value of the stopping problem is $V_1(1)$, i.e. $V_1(1)=\sup_{\tau\in\mathcal{M}_n}P(R_{\tau}=k)$. This formalizes the method of backward induction. See also Chow, Robbins and Siegmund \cite{ref3}.

It is well known that $X_1,X_2,\dots, X_n$ are independent and $X_j$ has a uniform distribution over $\{1,2,\dots,j\}$.
Given $X_i=x_i, i=1,\dots,j$, the conditional probability  of $R_j=k$ is the same as the probability that a random sample of size $j$ contains the $k$-th best candidate whose (relative) rank in the sample is $x_j$; thus
\begin{equation}\label{v11}
P(R_j=k|X_1=x_1,X_2=x_2,\dots,X_j=x_j)=\frac{{k-1\choose x_j-1}{n-k\choose j-x_j}}{{n\choose j}},
\end{equation}
where we adopt the usual convention that ${m\choose \ell}=0$ for $m<\ell$.

From the independence of $X_1,X_2,\dots,X_n$, the conditional expectation on the right hand side of (\ref{se2.1}) reduces to $E(V_{j+1}(x_1,x_2,\dots,x_j,X_{j+1}))$. Note also that $y_j(x_1,\dots,x_j)$ depends only on  $x_j$ ({\it cf.} (\ref{v11})), and so does $V_j(x_1,\dots,x_j)$. Hence,
we have
\begin{align}
&V_n(x_n)=y_n(x_n)\notag\\
\mbox{and}\;\;\;&V_j(x_j)=\max\left\{y_j(x_j),\frac{1}{j+1}\sum_{i=1}^{j+1}V_{j+1}(i)\right\}\;\;\mbox{for}\;\;j=n-1,n-2,\dots,1.\label{b4}
\end{align}
Thus, it is optimal to stop at the first $j$ with
\begin{equation*}
y_j(x_j)\ge\frac{1}{j+1}\sum_{i=1}^{j+1}V_{j+1}(i).
\end{equation*}

For the problem of optimally selecting the $k$-th best candidate with $k=3$,
we have $y_j(x_j)=P(R_j=3|X_1=x_1,\dots,X_j=x_j)$, which equals  ({\it cf.} (\ref{v11}))
\begin{equation}\label{e15}
y_j(x_j)=
\begin{cases}
\displaystyle\frac{j(n-j-1)(n-j)}{n(n-1)(n-2)},&\mbox{if}\;\;x_j=1;\\[4mm]
\displaystyle\frac{2j(j-1)(n-j)}{n(n-1)(n-2)},&\mbox{if}\;\;x_j=2;\\[4mm]
\displaystyle\frac{j(j-1)(j-2)}{n(n-1)(n-2)},&\mbox{if}\;\;x_j=3;\\[4mm]
0,&\mbox{otherwise}.
\end{cases}
\end{equation}
Setting $\sum_{i=\ell}^m c_i:=0$ whenever $\ell>m$, define for $n\ge3$,
\begin{align}
b_n&=\min\left\{j=2,3,\dots,n:\;\sum_{i=j+1}^n\frac{1}{i-2}\le\frac{1}{2}\right\},\label{v4e1}\\
u_n&=(b_n-2)(2n-4)\sum_{i=b_n}^n\frac{1}{i-2},\label{v4e2}\\
f_n(x)&=3x^2-(1+4n)x+(n-2)b_n+2(n+1)+u_n,\label{v4e3}\\
a_n&=\min\left\{j=2,3,\dots,n:\;f_n(j)\le0\right\}.\label{v4e4}
\end{align}

\begin{rem}
Note that  $3 \le b_n\le b_{n+1} \le b_n+1$ for
$n\ge 3$, implying that $f_n(1)>0$ for all $n\ge3$.
In order for $a_n$ in $(\ref{v4e4})$ to be well defined, we need to show that the second-order polynomial equation $f_n(x)=0$ has two real roots $x_0<y_0$ with $\lceil x_0\rceil\le y_0$ $(\mbox{so that}\; a_n=\lceil x_0\rceil)$. For $3\le n\le 31$, this can be verified by numerical computations. For $n\ge32$, we have $b_n<\frac{2n-1}{3}$ and $u_n\le(n-2)b_n$ $({\it cf.} \;(\ref{ae8})\;\mbox{and} \;(\ref{ae6}))$, implying that $f_n(\frac{2n-1}{3})<0$ and $f_n(\frac{2n+2}{3})<0$. So, $x_0<\frac{2n-1}{3}$, implying that $\lceil x_0\rceil<\frac{2n+2}{3}<y_0$.
With a little effort, it can be shown that $2 \le a_n\le a_{n+1}\le a_n+1$ for $n\ge 3$.
\end{rem}

The next theorem is our main result.

\begin{thm}\label{st4.1}
For $n\ge 3$, we have $a_n<b_n$. Furthermore, the stopping rule
\begin{equation*}
\tau_{3,n}=\min\{a_n\le j \le n: X_j=2\} \wedge \min\{b_n \le j \le n: X_j=3\}
\end{equation*}
maximizes the probability of selecting the 3rd best candidate.
\end{thm}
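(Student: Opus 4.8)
The plan is to run a \emph{guess-and-verify} argument against the backward-induction equation $(\ref{b4})$. Because the continuation value $W_j:=\frac{1}{j+1}\sum_{i=1}^{j+1}V_{j+1}(i)$ does not depend on the current relative rank $x_j$, it is optimal to stop at stage $j$ exactly when $y_j(x_j)\ge W_j$. I would therefore posit the double-threshold decisions, compute $W_j$ under them, and then check that the inequality $y_j(x_j)\ge W_j$ reproduces those very decisions: never stopping for $x_j=1$, stopping for $x_j=2$ iff $j\ge a_n$, and for $x_j=3$ iff $j\ge b_n$. By uniqueness of the solution of $(\ref{b4})$, this certifies that the posited $W_j$ are the true values and that $\tau_{3,n}$ attains $V_1(1)=p(3,n)$.

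The core is an explicit evaluation of $W_j$. Starting from $V_n=y_n$ (so $W_{n-1}=1/n$) and using $y_{j+1}(i)=0$ for $i\ge4$ in $(\ref{e15})$, the recursion $(\ref{b4})$ reduces, according to the regime of stage $j+1$, to
\[
W_j=\frac{(j-1)W_{j+1}+y_{j+1}(2)+y_{j+1}(3)}{j+1}\quad(j+1\ge b_n),\qquad W_j=\frac{jW_{j+1}+y_{j+1}(2)}{j+1}\quad(a_n\le j+1<b_n),
\]
and $W_j=W_{j+1}$ when $j+1<a_n$. I would linearize the first two by the substitutions $G_j:=\frac{n(n-1)(n-2)}{j(j-1)}W_j$ and $H_j:=\frac{n(n-1)(n-2)}{j}W_j$, which telescope them into $G_j=G_{j+1}+\frac{2n-j-3}{j-1}$ and $H_j=H_{j+1}+2(n-j-1)$. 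With $G_{n-1}=1$ these sum to closed forms; the constant $u_n$ of $(\ref{v4e2})$ is precisely the harmonic mass $\sum\frac{1}{i-2}$ accumulated in the upper regime and carried across the junction at $j=b_n$ into the middle regime. Substituting into the stopping inequalities, $y_j(3)\ge W_j\Leftrightarrow j-2\ge G_j$ collapses, after writing $\frac{2n-i-3}{i-1}=\frac{2n-4}{i-1}-1$, to $\sum_{i=j+1}^n\frac{1}{i-2}\le\frac12$, i.e.\ the definition $(\ref{v4e1})$ of $b_n$; and $y_j(2)\ge W_j\Leftrightarrow 2(j-1)(n-j)\ge H_j$ rearranges to $f_n(j)\le0$ with $f_n$ as in $(\ref{v4e3})$, i.e.\ the definition $(\ref{v4e4})$ of $a_n$.

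Two monotonicity observations then drive the verification. For rank $3$ the quantity $G_j-(j-2)$ is strictly decreasing in $j$ throughout the upper regime (each backward step adds the positive increment $\frac{2n-j-3}{j-1}$ while $j-2$ grows), so $y_j(3)\ge W_j$ holds for all $j\ge b_n$ and fails just below it—a clean one-sided threshold. The rank-$2$ comparison is the delicate part, because $f_n$ is an upward parabola with roots $x_0<y_0$, so $f_n\le0$ only on the bounded interval $[x_0,y_0]$, not on a ray. The resolution is that the equivalence $\{y_j(2)\ge W_j\}\Leftrightarrow\{f_n(j)\le0\}$ rests on the middle-regime form of $W_j$ and is therefore valid only for $j<b_n$; one must check that the larger root satisfies $y_0>b_n$ (as in the Remark, $y_0>\frac{2n+2}{3}>b_n$, with small $n$ handled numerically), so that within its regime $f_n(j)\le0\Leftrightarrow j\ge a_n=\lceil x_0\rceil$, while acceptance of rank $2$ for $j\ge b_n$ is verified separately against the upper-regime value through $2(n-j)\ge G_j$. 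I expect this gluing—ensuring each piecewise formula is invoked only where valid, that no acceptance decision reverts, and that rank $1$ is always rejected ($y_j(1)<W_j$ for every $j$, the tightest case being near $j\approx n/3$ in the constant lower regime where $W_j=p(3,n)$)—to be the main obstacle.

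Finally, the strict inequality $a_n<b_n$ amounts to the single bound $f_n(b_n-1)\le0$, for then convexity forces $x_0\le b_n-1\le y_0$ and hence $a_n=\lceil x_0\rceil\le b_n-1<b_n$. This bound is itself delicate: the constants above reveal that $f_n$ has a near-double root and both thresholds cluster near $\frac{2n}{3}$, so the crude harmonic estimates of the Remark do not suffice and a sharp comparison of $\sum_{i=b_n}^n\frac{1}{i-2}$ with $\frac12$ is needed. Granting this together with the well-definedness of $a_n$ noted in the Remark, the three regime computations and the sign checks show that ``continue until the first $j\ge a_n$ with $X_j=2$ or the first $j\ge b_n$ with $X_j=3$'' realizes the maximum in $(\ref{b4})$ at every stage, so $\tau_{3,n}$ is optimal.
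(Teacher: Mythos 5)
Your guess-and-verify plan is essentially the paper's proof: the closed forms you obtain for the continuation value $W_j$ in the three regimes are exactly those of Lemma~\ref{sl4.4}, the regime-by-regime sign checks (including the separate verification of rank-$2$ acceptance for $j\ge b_n$ and the rejection of rank $1$ everywhere) are Lemmas~\ref{sl4.5}--\ref{sl4.7}, and $a_n<b_n$ via $f_n(b_n-1)<0$ together with $b_n<y_0$ is Lemma~\ref{sl4.2}, with small $n$ handled numerically in both treatments. Your one misjudgment is the claimed delicacy of $f_n(b_n-1)\le0$: the roots of $f_n$ are in fact well separated ($x_0/n\to0.466$, $y_0/n\to0.867$, $b_n/n\to0.606$), and the crude bounds $u_n\le b_n(n-2)$ and $\frac{n+5}{2}<b_n<\frac{2n-1}{3}$ already yield $f_n(b_n-1)\le(b_n-3)\left[3b_n-(2n+2)\right]<0$.
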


Figure 1 illustrates the optimality of $\tau_{3,n}$ for the case $n=13$ with $a_{13}=7$ and $b_{13}=9$. With the help of a computer program in Mathematica, we have verified Theorem \ref{st4.1} for $3\le n\le 31$ by numerically evaluating $V_j(x_j)$, $j=n,n-1,\dots,1$. (For completeness, the computer program is provided in Section 5.) While it seems intuitively reasonable for the optimal stopping rule $\tau_{3,n}$ to involve two thresholds for general $n$, the exact expressions for the thresholds $a_n$ and $b_n$ in (\ref{v4e4}) and (\ref{v4e1}) were found by some guesswork and tedious analysis. To prove Theorem \ref{st4.1} for $n \ge 32$, we need the following lemmas whose proofs are relegated to Section 4.
\begin{figure}
\includegraphics[width=\textwidth,height=10cm]{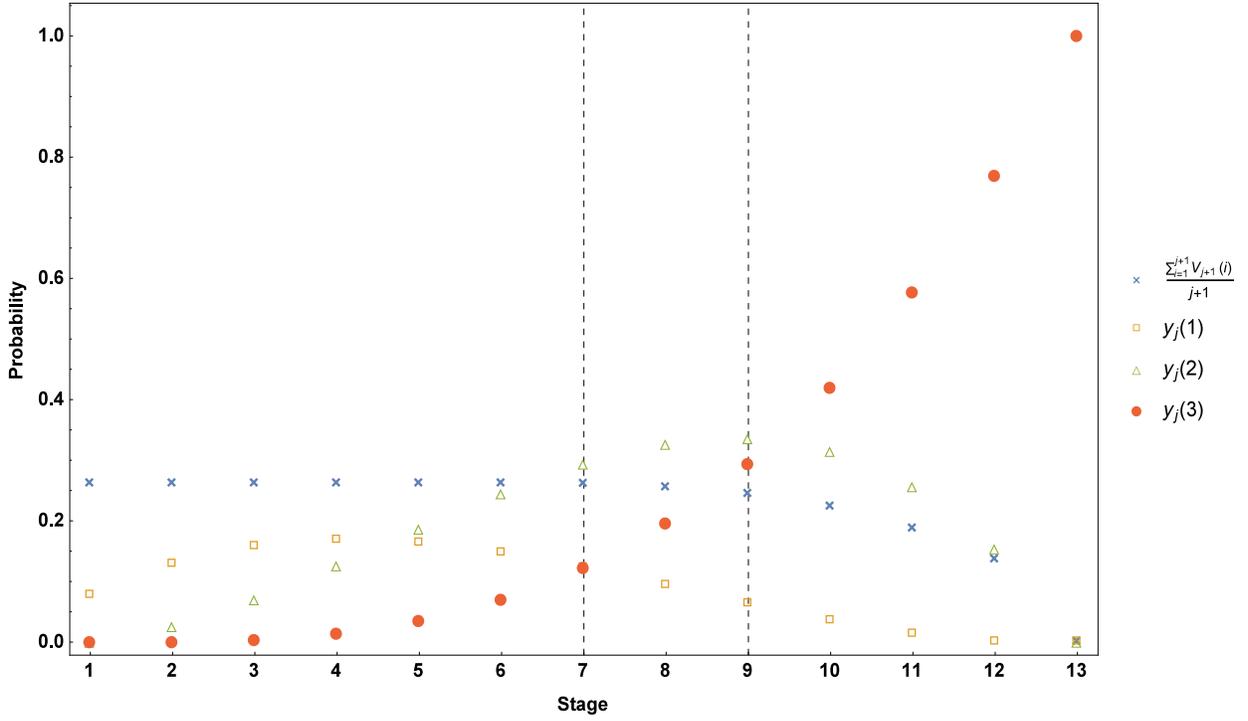}
\caption{The optimality of $\tau_{3,13}$.}
\end{figure}
\begin{lem}\label{sl4.2}
Let $y_0$ be the larger root of the second-order polynomial equation $f_n(x)=0$.
 Then for $n\ge32$, we have {\upshape(i)} $a_n<b_n$; {\upshape(ii)} $b_n<y_0$; {\upshape(iii)} $a_n>(n+4)/3$.
\end{lem}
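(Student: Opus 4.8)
The plan is to exploit that $f_n$ is an upward-opening parabola (leading coefficient $3>0$) with vertex at $x_v=(1+4n)/6$ and real roots $x_0<y_0$, and that $a_n=\lceil x_0\rceil$ with $\lceil x_0\rceil\le y_0$; thus each of the three claims amounts to locating one of $b_n$, $y_0$, $(n+4)/3$ relative to these roots. The two inputs I would prepare first are sharp two-sided estimates for $u_n$ and for $b_n$. From the minimality defining $b_n$ one has $\sum_{i=b_n}^n\frac1{i-2}>\frac12$, while the defining inequality for $b_n$ gives $\sum_{i=b_n+1}^n\frac1{i-2}\le\frac12$ and hence $\sum_{i=b_n}^n\frac1{i-2}\le\frac12+\frac1{b_n-2}$. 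Substituting these into (\ref{v4e2}) yields the key two-sided bound
\begin{equation*}
(n-2)(b_n-2)<u_n\le(n-2)b_n,
\end{equation*}
whose upper half is exactly the inequality $u_n\le(n-2)b_n$ noted in the Remark. I would also recall the bound $b_n<\frac{2n-1}{3}$ from the Remark and, in addition, establish a matching lower bound $b_n\ge\lceil\frac{n+5}{2}\rceil$ via the integral estimate $\sum_{m=a}^{b}\frac1m>\ln\frac{b+1}{a}$: at $j\approx n/2$ the tail sum $\sum_{i=j+1}^n\frac1{i-2}$ is asymptotically $\ln2>\frac12$, and for $n\ge32$ this leaves enough room to force $b_n>\frac{n+3}{2}$.

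Part (ii) is then immediate: the Remark already supplies $\frac{2n+2}{3}<y_0$, and $b_n<\frac{2n-1}{3}<\frac{2n+2}{3}$, so $b_n<y_0$. For part (i) I would evaluate $f_n$ at the integer $b_n-1$; a direct expansion of (\ref{v4e3}) collapses to
\begin{equation*}
f_n(b_n-1)=3b_n^2-3(n+3)b_n+6(n+1)+u_n.
\end{equation*}
Applying $u_n\le(n-2)b_n$ together with the (fortunate) factorization $3b^2-(2n+11)b+6(n+1)=3(b-3)\bigl(b-\tfrac{2(n+1)}{3}\bigr)$ gives $f_n(b_n-1)\le 3(b_n-3)\bigl(b_n-\tfrac{2(n+1)}{3}\bigr)\le0$, since $3\le b_n<\frac{2n-1}{3}<\frac{2(n+1)}{3}$. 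Because $a_n=\min\{j\ge2:f_n(j)\le0\}$ and $b_n-1\ge2$, this forces $a_n\le b_n-1<b_n$.

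For part (iii) I would instead evaluate $f_n$ at $(n+4)/3$; the cross terms cancel and $-n^2-n+6=-(n+3)(n-2)$, giving
\begin{equation*}
f_n\!\left(\tfrac{n+4}{3}\right)=(n-2)(b_n-n-3)+u_n.
\end{equation*}
The lower bound $u_n>(n-2)(b_n-2)$ then yields $f_n((n+4)/3)>(n-2)(2b_n-n-5)\ge0$, the last inequality holding because $b_n\ge\lceil\frac{n+5}{2}\rceil$ forces $2b_n-n-5\ge0$. Since one checks $(n+4)/3<x_v$ for $n\ge4$, and the parabola is nonpositive only on $[x_0,y_0]\ni x_v$, a point to the left of $x_v$ at which $f_n>0$ cannot lie in $[x_0,y_0]$ nor beyond $y_0$, hence lies to the left of $x_0$; thus $(n+4)/3<x_0\le\lceil x_0\rceil=a_n$.

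The main obstacle I anticipate is the lower bound $b_n\ge\lceil\frac{n+5}{2}\rceil$ used in (iii): unlike the upper bound $b_n<\frac{2n-1}{3}$, it must be proved from scratch, and the available margin is only $\ln2-\frac12\approx0.19$, so the integral comparison has to be carried out carefully enough (distinguishing the parities of $n$ and tracking the endpoint corrections) to survive the passage to the ceiling for every $n\ge32$. By contrast, the two algebraic evaluations of $f_n$ are routine once the two-sided bounds on $u_n$ are in hand, and the sign/vertex reasoning is elementary; the quantitative harmonic-sum estimate is where the real work lies.
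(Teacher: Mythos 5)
Your proposal is correct and follows essentially the same route as the paper's proof: the two-sided bound $(n-2)(b_n-2)<u_n\le(n-2)b_n$ derived from the minimality defining $b_n$, the evaluations of $f_n$ at $b_n-1$ and at $(n+4)/3$ with the same factorizations, and the sandwich $\frac{n+5}{2}<b_n<\frac{2n-1}{3}$ (the paper's Lemma 4.1, proved by exactly the integral comparison you defer to the end, which yields $b_n>\frac{n-1}{\sqrt{e}}+1>\frac{n+5}{2}$ for $n\ge32$). One small correction: your intermediate target ``$b_n>\frac{n+3}{2}$'' would not suffice when $n$ is even (it only gives $2b_n-n-5\ge-1$), so you must keep the stronger target $b_n\ge\lceil\frac{n+5}{2}\rceil$ that you state first and actually use in part (iii) --- which the integral estimate does deliver.
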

\begin{lem}\label{sl4.4}
Given $X_1=x_1,X_2=x_2,\dots,X_j=x_j$, let $h_j(x_j)=h_j(x_1,x_2,\dots,x_j)$ be the conditional probability of selecting the 3rd best candidate when $\tau_{3,n}$ is used for stages $j,j+1,\dots,n$. Then for $n\ge32$,
\begin{itemize}
\item[\upshape(i)]
$$\hspace*{-4mm}
h_j(x_j)=
\begin{cases}
\displaystyle\frac{(a_n-1)\left[a_n^2-(1+2n)a_n+(n-2)b_n+2(n+1)+u_n\right]}{n(n-1)(n-2)},&\mathrm{if}\;\;j<a_n;\\
y_j(2),&\mathrm{if}\;\;j\ge a_n\;\mathrm{and}\;x_j=2;\\[1mm]
\displaystyle\frac{j\left[j^2+(1-2n)j+(n-2)b_n+2+u_n\right]}{n(n-1)(n-2)},&\mathrm{if}\;\;a_n\le j\le b_n-1\;\mathrm{and}\;x_j\neq 2;\\
y_j(3),&\mathrm{if}\;\;j\ge b_n\;\mathrm{and}\;x_j=3;\\[1mm]
\displaystyle\frac{j(j-1)}{n(n-1)(n-2)}\left[(2n-4)\sum_{i=j+1}^{n}\frac{1}{i-2}-(n-j)\right],&\mathrm{if}\;\;j\ge b_n\;\mathrm{and}\;x_j\neq 2,3.
\end{cases}
$$
\item[\upshape(ii)]
$$\hspace*{-4mm}
\frac{1}{j+1}\sum_{i=1}^{j+1}h_{j+1}(i)=
\begin{cases}
\displaystyle\frac{(a_n-1)\left[a_n^2-(1+2n)a_n+(n-2)b_n+2(n+1)+u_n\right]}{n(n-1)(n-2)},&\mathrm{if}\;\;j<a_n;\\[4mm]
\displaystyle\frac{j\left[j^2+(1-2n)j+(n-2)b_n+2+u_n\right]}{n(n-1)(n-2)},&\mathrm{if}\;\;a_n\le j\le b_n-1;\\[4mm]
\displaystyle\frac{j(j-1)}{n(n-1)(n-2)}\left[(2n-4)\sum_{i=j+1}^{n}\frac{1}{i-2}-(n-j)\right],&\mathrm{if}\;\;b_n\le j\le n-1.
\end{cases}
$$
\end{itemize}
\end{lem}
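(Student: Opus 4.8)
The plan is to prove the two formulas simultaneously by downward induction on $j$, running from $j=n$ down to $j=1$. First I would record the recursion that $h_j$ obeys. Since $\tau_{3,n}$ bases its stop/continue decision at any stage $\ell$ only on $X_\ell$ and on whether $\ell\ge a_n$ or $\ell\ge b_n$, and since the reward depends only on the relative rank at the stopping stage (with $X_{j+1},\dots,X_n$ independent of the past), the conditional probability $h_j$ depends only on $x_j$ exactly as $V_j$ does, and satisfies
\begin{equation*}
h_j(x_j)=\begin{cases} y_j(x_j), & \text{if $\tau_{3,n}$ stops at stage $j$ given $x_j$},\\[1mm] \frac{1}{j+1}\sum_{i=1}^{j+1}h_{j+1}(i), & \text{if $\tau_{3,n}$ continues},\end{cases}
\end{equation*}
where the rule stops precisely when ($j\ge a_n$ and $x_j=2$) or ($j\ge b_n$ and $x_j=3$). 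In particular, on the continue-cases $h_j(x_j)$ is constant in $x_j$ and equals the quantity in part (ii); hence for $j\le n-1$ part (i) at $j$ follows at once from part (ii) at $j$ together with the explicit form of $y_j$ in (\ref{e15}), and the real task is to establish (ii) at $j$ from (i) at $j+1$. The base case $j=n$ is immediate: $n\ge b_n$, so $h_n(x_n)=y_n(x_n)$, and one checks that the stated $j\ge b_n$ expression reduces to $y_n(2)$, $y_n(3)$ and $0$ for $x_n=2$, $x_n=3$ and $x_n\neq 2,3$, the bracket vanishing because the empty sum $\sum_{i=n+1}^{n}$ and $n-n$ are both $0$.

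For the inductive step I would fix $j\le n-1$, assume (i) holds at $j+1$, and compute $\frac{1}{j+1}\sum_{i=1}^{j+1}h_{j+1}(i)$ by grouping summands. In each regime the stage-$(j+1)$ values take at most three distinct forms — $y_{j+1}(2)$, $y_{j+1}(3)$, and a common ``continue'' value — with multiplicities that depend only on whether stage $j+1$ lies in the regime $\ge b_n$ (giving $1,1,j-1$) or in the middle regime $[a_n,b_n-1]$ (giving $1,j$, since rank $3$ then does not trigger stopping). Factoring out $(j+1)j$ and dividing by $j+1$ collapses the sum to the asserted degree-three polynomial in $j$ over $n(n-1)(n-2)$. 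The only non-polynomial ingredient, $\sum_{i=j+1}^{n}\frac{1}{i-2}$, is managed by the single-term identity $\sum_{i=j+1}^{n}\frac{1}{i-2}=\frac{1}{j-1}+\sum_{i=j+2}^{n}\frac{1}{i-2}$, which turns the stage-$(j+1)$ sum into the stage-$j$ sum and produces exactly the extra constant $(2n-4)\cdot\frac{1}{j-1}$ needed to match the stated expression. This disposes of the within-regime computations for $b_n\le j\le n-1$ and for $a_n\le j\le b_n-2$, and of the trivial constant regime $j<a_n-1$.

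The delicate points, which I expect to be the main obstacle, are the two transitions where stage $j+1$ straddles a regime boundary, namely $j=b_n-1$ (so $j+1=b_n$) and $j=a_n-1$ (so $j+1=a_n$); here I would invoke Lemma \ref{sl4.2}(i) to guarantee $a_n<b_n$, so that the middle regime is genuinely present and the two transitions are distinct. At $j=b_n-1$ the averaging uses the $j\ge b_n$ formulas of (i), and forcing the result to equal the middle-regime expression $j[j^2+(1-2n)j+(n-2)b_n+2+u_n]/[n(n-1)(n-2)]$ requires precisely $(b_n-2)(2n-4)\sum_{i=b_n}^{n}\frac{1}{i-2}=u_n$, which is exactly the definition (\ref{v4e2}) of $u_n$: it is engineered so the continue value is continuous across the $b_n$-boundary, absorbing the harmonic-type sum into the polynomial form. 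At $j=a_n-1$ the averaging uses the middle-regime formulas, and after factoring $(a_n-1)$ and simplifying, the bracket collapses to $a_n^2-(1+2n)a_n+(n-2)b_n+2(n+1)+u_n$, which is exactly the constant asserted for the regime $j<a_n$. The remaining work is bookkeeping — verifying the multiplicities (which uses $b_n\ge 3$ so that ranks $2$ and $3$ are attainable), checking that the constant regime propagates unchanged for $j<a_n-1$, and confirming each algebraic identity. These computations are elementary but lengthy; the chief conceptual content is recognizing that $u_n$ and the three polynomial pieces have been reverse-engineered to make the induction close precisely at the two boundaries.
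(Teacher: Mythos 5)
Your proof is correct, but it follows a genuinely different route from the paper's. You verify the stated closed forms by downward induction on $j$, using the dynamic-programming recursion $h_j(x_j)=y_j(x_j)$ on the stopping set and $h_j(x_j)=\frac{1}{j+1}\sum_{i=1}^{j+1}h_{j+1}(i)$ otherwise, and checking consistency within each regime and across the two boundaries $j+1=b_n$ and $j+1=a_n$ (where, as you observe, the definition (\ref{v4e2}) of $u_n$ is exactly what makes the $b_n$-transition close). The paper instead computes $h_j$ directly and ``forwards'': it decomposes the success event according to the stage $i$ at which $\tau_{3,n}$ stops, evaluates $P(Q_i)=\frac{a_n-1}{i(i-1)}$ for $a_n\le i\le b_n-1$ and $P(Q_i')=P(Q_i'')=\frac{(a_n-1)(b_n-2)}{i(i-1)(i-2)}$ for $i\ge b_n$, and sums the series $\sum_i P(\cdot)\,y_i(\cdot)$ in closed form to obtain the $j<a_n$ expression (treating the other regimes ``similarly''); part (ii) is then obtained from part (i) by the same boundary averaging you perform. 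Your induction has the advantage of being a uniform, self-contained verification in which every regime and both transitions are handled by one mechanism, whereas the paper's computation explains where the closed forms (and in particular $u_n$) come from by summing the stopping-time distribution; the algebra involved is of comparable weight. Two minor points to make explicit: the multiplicity counts $1,j$ and $1,1,j-1$ in your averaging require $a_n\ge 2$ and $b_n\ge 3$ (both guaranteed by (\ref{v4e4}) and Remark 2.1), and the recursion you start from should be justified by the independence of $X_1,\dots,X_n$ and the fact that $R_j$ given $X_j$ is conditionally independent of the past, exactly as the paper records around (\ref{b4}); with these noted the argument is complete.
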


\begin{lem}\label{sl4.5}
For $n\ge32$, $1\le j<a_n$ and $1\le x_j\le j$, we have
\begin{equation*}
y_j(x_j)<\frac{1}{j+1}\sum_{i=1}^{j+1}h_{j+1}(i).
\end{equation*}
\end{lem}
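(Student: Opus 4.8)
The plan is to read the right-hand side as the continuation value of $\tau_{3,n}$ at stage $j$, which for $j<a_n$ is by Lemma \ref{sl4.4}(ii) the $j$-independent constant $C:=\frac{(a_n-1)[a_n^2-(1+2n)a_n+(n-2)b_n+2(n+1)+u_n]}{n(n-1)(n-2)}$. The lemma then asserts that stopping before stage $a_n$ is strictly worse than continuing, so it suffices to prove $\max_{1\le x_j\le j}y_j(x_j)<C$ for each $1\le j<a_n$. Since $y_j(x_j)=0$ for $x_j\ge4$, and since for $j<a_n<b_n<(2n-1)/3$ (Lemma \ref{sl4.2}(i) and the remark) one has $y_j(3)/y_j(2)=(j-2)/[2(n-j)]\le1$, this maximum equals $\max\{y_j(1),y_j(2)\}$. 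I would bound the two surviving rewards against $C$ separately.

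For the rank-$2$ reward, $y_j(2)=\frac{2j(j-1)(n-j)}{n(n-1)(n-2)}$ is increasing on $1\le j\le a_n-1$, because the cubic $j(j-1)(n-j)$ increases up to a point exceeding $2n/3>a_n$; hence $y_j(2)\le y_{a_n-1}(2)$. The key inequality $y_{a_n-1}(2)<C$ is read off from the definition of $a_n$: since $f_n(a_n-1)>0$, substituting the resulting lower bound for $(n-2)b_n+2(n+1)+u_n$ into the bracket defining $C$ and factoring (the relevant discriminant is the perfect square $(n-1)^2$, with roots $2$ and $n+1$) gives $C>\frac{2(a_n-1)(a_n-2)(n+1-a_n)}{n(n-1)(n-2)}=y_{a_n-1}(2)$. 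This disposes of the rank-$2$ case, and hence of rank $3$.

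For the rank-$1$ reward I would use the crude global bound $y_j(1)=\frac{j(n-j-1)(n-j)}{n(n-1)(n-2)}\le\frac{\max_x x(n-x)^2}{n(n-1)(n-2)}=\frac{4n^3}{27\,n(n-1)(n-2)}$. Combining this with the lower bound $C>y_{a_n-1}(2)$ just obtained, it remains only to verify the single polynomial inequality $(a_n-1)(a_n-2)(n+1-a_n)>\frac{2n^3}{27}$, after which $C>\max_j y_j(1)$ follows.

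This last inequality is the crux, and it is exactly where Lemma \ref{sl4.2}(iii) alone fails: at $a_n=(n+4)/3$ its left side equals $\frac{2n^3-3n^2-3n+2}{27}<\frac{2n^3}{27}$, so the bound $a_n>(n+4)/3$ only ties at leading order. The remedy is to prove a genuinely stronger lower bound $a_n>s_0n$ for a fixed $s_0\in(1/3,2/3)$. Since $s_0n$ lies to the left of the vertex of the upward parabola $f_n$, this reduces to showing $f_n(s_0n)>0$, i.e. that $(n-2)b_n+u_n$ exceeds roughly $(4s_0-3s_0^2)n^2$. Such a lower bound I would extract from the definition $(\ref{v4e1})$ of $b_n$ via the integral estimates $b_n\ge(n-1)e^{-1/2}+1$ and $\sum_{i=b_n}^n 1/(i-2)>1/2$, which together yield $(n-2)b_n+u_n>2e^{-1/2}(n-1)(n-2)$; as $2e^{-1/2}\approx1.213$ sits comfortably above the threshold value $1$ attained at $s_0=1/3$, one can fix $s_0$ slightly above $1/3$ so that the cubic inequality holds for all $n\ge32$. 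This closes the rank-$1$ case and completes the proof for $n\ge32$, the range $3\le n\le31$ being covered by the direct numerical evaluation of the value function in Section 5.
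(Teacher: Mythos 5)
Your handling of ranks $2$ and $3$ is the same as the paper's: reduce to $\max_{j<a_n}y_j(2)=y_{a_n-1}(2)$ via the monotonicity of $y_j(2)$ up to $(2n-1)/3>a_n$, dominate $y_j(3)$ by $y_j(2)$ on that range, and read $y_{a_n-1}(2)<c_n$ off from $f_n(a_n-1)>0$; your factorization through the discriminant $(n-1)^2$ with roots $2$ and $n+1$ is exactly the algebra behind the paper's observation that the inequality is equivalent to $f_n(a_n-1)>0$. Where you genuinely diverge is the rank-$1$ reward, and here the paper has a much lighter trick that you missed: $y_j(1)$ peaks at $j=\lceil (n-2)/3\rceil$, and $y_j(1)\le y_{j+1}(2)$ exactly when $n-j\le 2(j+1)$, i.e.\ $j\ge (n-2)/3$; hence $\max_{j}y_j(1)\le y_{\lceil(n-2)/3\rceil+1}(2)\le\max_{j<a_n}y_j(2)$, where the last step needs only $a_n>(n+4)/3$ from Lemma \ref{sl4.2}(iii). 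This folds the rank-$1$ case into the rank-$2$ case at no extra cost. Your route --- bounding $y_j(1)$ by the global maximum $\tfrac{4n^3}{27n(n-1)(n-2)}$ and comparing with $c_n>\tfrac{2(a_n-1)(a_n-2)(n+1-a_n)}{n(n-1)(n-2)}$ --- is correct but forces you, as you rightly diagnose, to upgrade Lemma \ref{sl4.2}(iii) to $a_n>s_0n$ with $s_0$ strictly above $1/3$. Your proposed upgrade is sound: (\ref{ae6}) and (\ref{ae7}) give $(n-2)b_n+u_n>2(n-2)(b_n-1)>2e^{-1/2}(n-1)(n-2)$, which does force $f_n(s_0n)>0$ for suitable $s_0<2/3$. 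But ``slightly above $1/3$'' undersells what is needed: the second-order term of $(s_0n-1)(s_0n-2)(n+1-s_0n)-\tfrac{2}{27}n^3$ requires roughly $s_0>\tfrac13+\tfrac{5}{3n}$ (about $0.385$ at $n=32$), so you must commit to a concrete value --- e.g.\ $s_0=2/5$, for which one checks $f_n(2n/5)>0.093n^2-2.04n+4.43>0$ and $(2n/5-1)(2n/5-2)(3n/5+1)>\tfrac{2}{27}n^3$ for all $n\ge 32$, using also that $(a-1)(a-2)(n+1-a)$ is increasing up to about $2n/3>a_n$. With that finite amount of bookkeeping your proof closes; the paper's comparison $y_j(1)\le y_{j+1}(2)$ simply makes the whole auxiliary estimate unnecessary.
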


\begin{lem}\label{sl4.6}
For $n\ge32$ and $a_n\le j<b_n$, we have
\upshape(i) $y_j(2)\ge\frac{1}{j+1}\sum_{i=1}^{j+1}h_{j+1}(i)$;
\upshape(ii) $y_j(1)<\frac{1}{j+1}\sum_{i=1}^{j+1}h_{j+1}(i)$;
\upshape(iii) $y_j(3)<\frac{1}{j+1}\sum_{i=1}^{j+1}h_{j+1}(i)$.
\end{lem}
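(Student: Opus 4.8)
The plan is to clear the common denominator $n(n-1)(n-2)$ in every inequality and compare numerators, using the explicit formula for $\frac{1}{j+1}\sum_{i=1}^{j+1}h_{j+1}(i)$ on the range $a_n\le j\le b_n-1$ supplied by Lemma~\ref{sl4.4}(ii), together with the reward formulas $y_j(\cdot)$ in \eqref{e15} and the definitions of $a_n,b_n,u_n,f_n$ in \eqref{v4e1}--\eqref{v4e4}. On this range the continuation value has numerator $j\bigl[j^{2}+(1-2n)j+(n-2)b_n+2+u_n\bigr]$, and the range is nonempty because $a_n<b_n$ (Lemma~\ref{sl4.2}(i)). Each of the three comparisons then cancels the positive factor $j$ and collapses to an elementary polynomial inequality that is matched against the defining properties of $a_n$ and $b_n$.

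For (i), subtracting the numerators of $y_j(2)$ and of the continuation value and rearranging term by term produces, after multiplying by $-1$, exactly the polynomial $f_n(j)$ of \eqref{v4e3}; thus $y_j(2)\ge\frac{1}{j+1}\sum_{i=1}^{j+1}h_{j+1}(i)$ is equivalent to $f_n(j)\le0$. Since $f_n$ is an upward parabola with roots $x_0<y_0$, this holds precisely when $x_0\le j\le y_0$. Now $a_n=\lceil x_0\rceil$ gives $j\ge a_n\ge x_0$, while $j\le b_n-1<b_n<y_0$ by Lemma~\ref{sl4.2}(ii); hence $f_n(j)\le0$ and (i) follows.

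For (iii), the same cancellation reduces $y_j(3)<\frac{1}{j+1}\sum_{i=1}^{j+1}h_{j+1}(i)$ to $2(n-2)j<(n-2)b_n+u_n$. The left side increases in $j$, so it suffices to treat $j=b_n-1$, where it becomes $(n-2)(b_n-2)<u_n$; substituting $u_n=(b_n-2)(2n-4)\sum_{i=b_n}^{n}\frac{1}{i-2}$ and dividing by $(n-2)(b_n-2)>0$, this is just $\sum_{i=b_n}^{n}\frac{1}{i-2}>\frac12$, which holds by the minimality of $b_n$ in \eqref{v4e1} (the index $b_n-1$ fails the defining condition).

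For (ii) a convenient simplification occurs: after cancelling $j$, every $j$-dependent term drops out and $y_j(1)<\frac{1}{j+1}\sum_{i=1}^{j+1}h_{j+1}(i)$ collapses to the $j$-free inequality $(n-2)(n+1-b_n)<u_n$, i.e., writing $S:=\sum_{i=b_n}^{n}\frac{1}{i-2}$ and using $u_n=2(n-2)(b_n-2)S$, to $n+1-b_n<2(b_n-2)S$. Since $S>\frac12$ (the estimate used in (iii)), we have $2(b_n-2)S>b_n-2$, so it is enough to prove $n+1-b_n\le b_n-2$, that is $b_n\ge(n+3)/2$. This lower bound on $b_n$ is the main obstacle, as the slack in $S>\frac12$ is thin; I would obtain it from \eqref{v4e1} by taking $j=\lceil(n+1)/2\rceil$ and estimating $\sum_{i=j+1}^{n}\frac{1}{i-2}=\sum_{m=j-1}^{n-2}\frac1m>\ln\frac{n-1}{j-1}\ge\ln\bigl(2-\tfrac2n\bigr)$, which exceeds $\frac12$ for all $n\ge32$. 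Hence this $j$ violates the defining inequality for $b_n$, forcing $b_n-1\ge\lceil(n+1)/2\rceil$ and therefore $b_n\ge(n+3)/2$. Combining gives $n+1-b_n\le(n-1)/2\le b_n-2<2(b_n-2)S$, which establishes (ii).
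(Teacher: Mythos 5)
Your proposal is correct and follows essentially the same route as the paper: clearing the denominator $n(n-1)(n-2)$, reducing (i) to $f_n(j)\le 0$ on $[x_0,y_0]\supset[a_n,b_n-1]$, and reducing (ii) and (iii) to the bounds $u_n>(b_n-2)(n-2)$ (equivalently $\sum_{i=b_n}^n\frac{1}{i-2}>\frac12$) and $b_n\ge(n+3)/2$. The only cosmetic difference is that you re-derive the lower bound on $b_n$ by an integral comparison at $j=\lceil(n+1)/2\rceil$, whereas the paper simply invokes its Lemma 4.1 (inequality (\ref{ae8}), $b_n>(n+5)/2$), which is proved by the same kind of estimate.
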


\begin{lem}\label{sl4.7}
For $n\ge32$ and $b_n\le j\le n-1$, we have
\upshape(i) $y_j(1)<\frac{1}{j+1}\sum_{i=1}^{j+1}h_{j+1}(i)$; (ii) $y_j(2)\ge\frac{1}{j+1}\sum_{i=1}^{j+1}h_{j+1}(i)$; (iii) $y_j(3)\ge\frac{1}{j+1}\sum_{i=1}^{j+1}h_{j+1}(i)$.
\end{lem}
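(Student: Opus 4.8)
The plan is to reduce each of the three inequalities to an elementary statement about the single quantity $S_j:=\sum_{i=j+1}^n\frac{1}{i-2}=\sum_{m=j-1}^{n-2}\frac{1}{m}$, and to exploit that $S_j\le\frac12$ for $j\ge b_n$, which holds by the definition (\ref{v4e1}) of $b_n$ together with the fact that $S_j$ decreases in $j$. Writing $W_j:=\frac{1}{j+1}\sum_{i=1}^{j+1}h_{j+1}(i)$, Lemma \ref{sl4.4}(ii) supplies, for $b_n\le j\le n-1$, the closed form $W_j=\frac{j(j-1)}{n(n-1)(n-2)}\big[2(n-2)S_j-(n-j)\big]$. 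Since $W_j$ and each of $y_j(1),y_j(2),y_j(3)$ (see (\ref{e15})) carry the common denominator $n(n-1)(n-2)$ and a common positive factor $j$ (in fact $j(j-1)$ for parts (ii) and (iii), and $j\ge b_n\ge 3$), I may cancel these factors and compare the surviving expressions directly.

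Part (iii) is the quickest. Cancelling $\frac{j(j-1)}{n(n-1)(n-2)}$ turns $y_j(3)\ge W_j$ into $(j-2)\ge 2(n-2)S_j-(n-j)$; using the identity $(j-2)+(n-j)=n-2$ this is precisely $S_j\le\frac12$, which holds for all $j\ge b_n$ as noted above.

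For part (i), cancelling the factor $\frac{j}{n(n-1)(n-2)}$ reduces $y_j(1)<W_j$ to $(n-j-1)(n-j)<(j-1)\big[2(n-2)S_j-(n-j)\big]$; expanding and regrouping the terms not involving $S_j$ produces a common factor $n-2$ on each side and leaves the clean inequality $n-j<2(j-1)S_j$. I would then introduce $\phi(j):=2(j-1)S_j-(n-j)$ and compute, using $S_{j+1}=S_j-\frac{1}{j-1}$, the forward difference $\phi(j+1)-\phi(j)=2S_j-1-\frac{2}{j-1}$. Because $S_j\le\frac12$ for $j\ge b_n$, this difference is strictly negative, so $\phi$ is decreasing on $[b_n,n-1]$ and is minimized at the right endpoint $j=n-1$, where $\phi(n-1)=2(n-2)\cdot\frac{1}{n-2}-1=1>0$. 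Hence $\phi(j)\ge 1>0$ on the whole range, and (i) follows.

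Part (ii) is where the one real difficulty lies, and I expect it to be the main obstacle. Cancelling $\frac{j(j-1)}{n(n-1)(n-2)}$ reduces $y_j(2)\ge W_j$ to $2(n-j)\ge 2(n-2)S_j-(n-j)$, i.e.\ to $g(j):=3(n-j)-2(n-2)S_j\ge 0$. The naive bound $S_j\le\frac12$ only gives $g(j)\ge 2n+2-3j$, which is useless when $j$ is close to $n$; one must instead use that $S_j$ is far below $\frac12$ there. The clean way to capture this is through the forward difference $g(j+1)-g(j)=-3+\frac{2(n-2)}{j-1}$, which is positive for $j<\frac{2n-1}{3}$ and negative afterwards. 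Thus $g$ first increases and then decreases, so its minimum over $[b_n,n-1]$ is attained at one of the two endpoints. At $j=n-1$ we get $g(n-1)=3-2=1>0$, and at $j=b_n$ the bound $S_{b_n}\le\frac12$ gives $g(b_n)\ge 2n+2-3b_n>0$, the final strict inequality using $b_n<\frac{2n-1}{3}$ for $n\ge 32$ (the Remark). Hence $g\ge 0$ on the entire range, which proves (ii) and completes the argument.
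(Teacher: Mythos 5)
Your proposal is correct: all three reductions (cancelling the common factor $j$ or $j(j-1)$ over $n(n-1)(n-2)$ and rewriting everything in terms of $S_j=\sum_{i=j+1}^n\frac{1}{i-2}$) match the paper's, the algebra checks out (in particular $\phi(j+1)-\phi(j)=2S_j-1-\frac{2}{j-1}$ and $g(j+1)-g(j)=-3+\frac{2(n-2)}{j-1}$ are both right), and part (iii) is verbatim the paper's one-line argument via $S_j\le S_{b_n}\le\frac12$ from (\ref{v4e1}). Where you genuinely diverge is in how the residual $j$-dependence is controlled in (i) and (ii). The paper first proves two auxiliary monotonicity facts, (\ref{ae10}) that $\frac{j-1}{n-j}S_j$ is increasing and (\ref{ae11}) that $\frac{1}{n-j}S_j$ is decreasing, and then pushes both comparisons down to $j=b_n$, which forces it to invoke both halves of (\ref{ae8}): $b_n>\frac{n+5}{2}$ for part (i) and $b_n<\frac{2n-1}{3}$ for part (ii). You instead take forward differences of the comparison quantities $\phi$ and $g$ themselves: $\phi$ is strictly decreasing on $[b_n,n-1]$ (using only $S_j\le\frac12$) with $\phi(n-1)=1$, and $g$ is unimodal with both endpoint values positive. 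The payoff is that your part (i) needs neither (\ref{ae10}) nor the lower bound $b_n>\frac{n+5}{2}$ — only the defining property of $b_n$ and the trivial evaluation $S_{n-1}=\frac{1}{n-2}$ — while your part (ii) replaces the ratio-monotonicity (\ref{ae11}) by an endpoint check of a concave-like sequence, still using $b_n<\frac{2n-1}{3}$ at the left endpoint exactly as the paper does. Both arguments are elementary and of comparable length; yours is self-contained within the lemma and slightly leaner on the prerequisites from Lemma \ref{sl4.1}.
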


\begin{proof}[\bf Proof of Theorem \ref{st4.1}]
As remarked before, the theorem has been verified for $3\le n\le31$ by numerical computations.
For $n\ge32$, we need to show that $h_j$ satisfies
\begin{equation}\label{e14}
h_j(x_j)=\max\left\{y_j(x_j),\frac{1}{j+1}\sum_{i=1}^{j+1}h_{j+1}(i)\right\}\;\mbox{for}\;\;1\le j<n.
\end{equation}
Since $h_j(x_j)$ is the conditional probability of selecting the 3rd best candidate when $\tau_{3,n}$ is used for stages $j,\dots,n$, we have $h_j(x_j)=\frac{1}{j+1}\sum_{i=1}^{j+1}h_{j+1}(i)$ if either ($j<a_n$) or ($a_n\le j<b_n$ and $x_j\neq 2$) or ($b_n\le j<n$ and $x_j\neq 2,3$), which together with Lemmas \ref{sl4.5} -- \ref{sl4.7} establishes (\ref{e14}).
\end{proof}

\begin{rem}\label{r1}
Let $d_1=\lim_{n\to\infty} a_n/n$ and $d_2=\lim_{n\to\infty} b_n/n$. It is shown in Section 4 that
\begin{equation}\label{b5}
d_1=\frac{2}{2\sqrt{e}+\sqrt{4e-6\sqrt{e}}}\approx0.466\;\;\mbox{and}\;\;d_2=\frac{1}{\sqrt{e}}\approx0.606.
\end{equation}
It is also shown in Section 4 that as $n \to \infty$, $h_1(1)=p(3,n)$, the maximum probability of selecting the 3rd best candidate, tends to
\begin{equation}\label{b6}
p(3,\infty)=2d_1^2(1-d_1)=\frac{8\left(2\sqrt{e}-2+\sqrt{4e-6\sqrt{e}}\right)}{\left(2\sqrt{e}+\sqrt{4e-6\sqrt{e}}\right)^3}.
\end{equation}
Note that $p(3,\infty)\approx0.232<0.25=p(2,\infty)$. These limiting results agree with the asymptotic solution for $k=3$ in Szajowski \cite{ref26}.
\end{rem}

\section{Some  results on $p(k,n)$ and $p(k,\infty)$}
\hspace*{18pt}In this section, we present several inequalities for $p(k,n)$ and $p(k,\infty):=\lim_{n \to \infty} p(k,n)$.

\begin{thm}\label{t1}
For $n\ge 3$ and $1<k<n$, we have
$p(1,n)=p(n,n)>p(k,n)$.
\end{thm}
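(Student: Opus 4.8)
The plan is to establish the two assertions separately: the symmetry $p(1,n)=p(n,n)$, and then the strict inequality $p(1,n)>p(k,n)$ for $1<k<n$. For the equality I would exploit the order-reversing symmetry of the model. Replacing each absolute rank $R_j$ by $n+1-R_j$ is a measure-preserving bijection on the uniformly distributed permutations, and it transforms the relative ranks by $X_j\mapsto j+1-X_j$, which for each $j$ is a bijection of $\{1,\dots,j\}$ onto itself preserving the uniform law. Hence it leaves the filtration $\{\mathcal{F}_j\}$ invariant and carries the problem of selecting the best candidate (absolute rank $1$) onto the problem of selecting the worst (absolute rank $n$), matching admissible stopping rules and their success probabilities bijectively. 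This gives $p(n,n)=p(1,n)$ at once.

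For the inequality I would write $y_j^{(k)}(x)$ for the reward in the rank-$k$ problem (cf.\ (\ref{v11})) and record the identity $\sum_{x=1}^j y_j^{(k)}(x)=j/n$, valid for every $k$, since $\sum_x y_j^{(k)}(x)\,P(X_j=x)=P(R_j=k)=1/n$ while $P(X_j=x)=1/j$. Writing $V_j^{(k)}$ for the value function and $W_j^{(k)}:=\frac{1}{j+1}\sum_{i=1}^{j+1}V_{j+1}^{(k)}(i)$ for the continuation value, the relation $V_j^{(k)}(x)=\max\{y_j^{(k)}(x),W_j^{(k)}\}$ from (\ref{b4}), summed over $x=1,\dots,j$, collapses the vector-valued backward induction to the scalar recursion
\begin{equation*}
W_{j-1}^{(k)}=W_j^{(k)}+\tfrac1j\,\phi_{k,j}\!\left(W_j^{(k)}\right),\qquad \phi_{k,j}(w):=\sum_{x=1}^j\bigl(y_j^{(k)}(x)-w\bigr)^+,
\end{equation*}
with the common terminal value $W_{n-1}^{(k)}=1/n$ (here $\sum_x y_n^{(k)}(x)=1$).

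The crux is the observation that, for each fixed $j$, the rank-$1$ reward vector $(y_j^{(1)}(x))_x=(j/n,0,\dots,0)$ majorizes $(y_j^{(k)}(x))_x$, both having sum $j/n$; since $t\mapsto(t-w)^+$ is convex, Karamata's inequality yields $\phi_{1,j}(w)\ge\phi_{k,j}(w)$ for all $w\ge0$. As the map $g_{k,j}(w):=w+\tfrac1j\phi_{k,j}(w)$ is nondecreasing and dominated by $g_{1,j}$, a backward induction starting from the common value $W_{n-1}^{(k)}=W_{n-1}^{(1)}$ gives $W_j^{(k)}\le W_j^{(1)}$ for every $j$. Since $y_1^{(k)}(1)=1/n$ for every $k$, this yields $p(k,n)=\max\{1/n,W_1^{(k)}\}\le\max\{1/n,W_1^{(1)}\}=p(1,n)$.

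The step I expect to be the main obstacle is upgrading this to a strict inequality for $1<k<n$. Here I would locate a single stage $j^\ast$ at which the rank-$k$ reward is genuinely spread over at least two relative ranks (which occurs precisely because $1<k<n$), so that the majorization gap is strict at the relevant point, $\phi_{1,j^\ast}(W_{j^\ast}^{(1)})>\phi_{k,j^\ast}(W_{j^\ast}^{(1)})$; this forces $W_{j^\ast-1}^{(k)}<W_{j^\ast-1}^{(1)}$. Because $g_{1,j}$ has slope at least $1-1/j>0$ and dominates $g_{k,j}$, the strict gap then propagates down the recursion to $W_1^{(k)}<W_1^{(1)}$, and together with $W_1^{(1)}=p(1,n)>1/n$ this delivers $p(1,n)>p(k,n)$. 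The delicate points are pinning down the strict spread of the reward vector exactly at the continuation value $W_{j^\ast}^{(1)}$, and verifying that strictness survives the backward pass rather than being washed out where $g_{k,j}$ is locally flat.
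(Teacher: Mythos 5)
Your argument is correct, and it takes a genuinely different route from the paper's. The paper gets $p(1,n)=p(n,n)$ from the same order-reversing symmetry you use, but for the inequality it argues at the level of stopping rules: it writes an optimal rule as $\tau=\min\{j:X_j\in S_j\}$, replaces each nonempty acceptance set $S_j$ by $S_j'=\{1\}$ to obtain a rule $\tau'$ for the best-choice problem, and compares term by term using $P(R_j=k,X_j\in S_j)\le 1/n=P(R_j=1,X_j=1)$ together with $P(X_i\notin S_i')\ge P(X_i\notin S_i)$; strictness is then extracted by a case analysis of $S_{n-1}$. You instead work on the dynamic-programming side: the observation that $(y_j^{(1)}(x))_x=(j/n,0,\dots,0)$ majorizes every rank-$k$ reward vector of the same total $j/n$, pushed through the scalar recursion $W_{j-1}=g_{k,j}(W_j)$, compares \emph{all} continuation values at once and is arguably cleaner for the weak inequality (note $\phi_{1,j}(w)=(j/n-w)^{+}\ge\sum_x(y_j^{(k)}(x)-w)^{+}$ for $w\ge0$ can also be checked directly, without Karamata); the paper's rule-surgery device, on the other hand, is what generalizes to the set-selection result of Theorem \ref{t2}, where the target reward vector is no longer a majorization-extreme point. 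The one step you leave open --- locating $j^{*}$ --- closes easily, and at the same stage where the paper finds its strict inequality, namely $j^{*}=n-1$: by (\ref{v11}), for $1<k<n$ the vector $(y_{n-1}^{(k)}(x))_x$ is supported on exactly the two relative ranks $x=k-1$ and $x=k$, with positive values $(k-1)/n$ and $(n-k)/n$, while $W_{n-1}^{(k)}=W_{n-1}^{(1)}=1/n$, so that
\begin{equation*}
\phi_{k,n-1}\!\left(\tfrac1n\right)=\tfrac{k-2}{n}+\tfrac{n-k-1}{n}=\tfrac{n-3}{n}<\tfrac{n-2}{n}=\phi_{1,n-1}\!\left(\tfrac1n\right),
\end{equation*}
giving $W_{n-2}^{(k)}=g_{k,n-1}(1/n)<g_{1,n-1}(1/n)=W_{n-2}^{(1)}$. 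Your worry about the gap being washed out is unfounded: the propagation step $W_{j-1}^{(k)}=g_{k,j}(W_j^{(k)})\le g_{1,j}(W_j^{(k)})<g_{1,j}(W_j^{(1)})=W_{j-1}^{(1)}$ needs only strict monotonicity of $g_{1,j}$, which holds for every $j\ge2$ because $g_{1,j}(w)=(1-\tfrac1j)w+\tfrac1n$ on $[0,j/n]$ and $g_{1,j}(w)=w$ above; flat pieces of $g_{k,j}$ never enter. Combined with $W_1^{(1)}=p(1,n)\ge\frac1n\sum_{j=2}^{n}\frac{1}{j-1}>\frac1n$ for $n\ge3$, this yields $p(k,n)=\max\{1/n,W_1^{(k)}\}<p(1,n)$, completing your proof.
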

\begin{proof}
By symmetry, $p(1,n)=p(n,n)$. (More generally, $p(k,n)=p(n-k+1,n)$.) For the problem of  selecting the $k$-th best candidate ($1<k<n$),
 a (non-randomized) optimal stopping rule $\tau$
 is determined by a sequence of subsets $\{S_j\}$ such that
 $S_j\subset \{1,2,\dots,j\}\; (j=1,\dots,n)$ and
$
\tau=\min\{j:X_j\in S_j\}$. Since stopping at $n$ is enforced (if $\tau>n-1$), we may assume that $S_n=\{1,2,\dots,n\}$. Thus,
\begin{equation}
P(R_{\tau}=k)=p(k,n). \label{e0}
\end{equation}
 Define, for $j=1,\dots, n-1$,
\begin{equation*}
S_{j}'=
\begin{cases}
\emptyset,&\;\;\mbox{if}\;S_j=\emptyset;\\
\{1\},&\;\;\mbox{if}\;S_j\ne \emptyset;
\end{cases}
\end{equation*}
 and  $S_n'=\{1,2,\dots,n\}$. Let $\tau'=\min\{j: X_j \in S_j'\}$, which, as a stopping rule, may be applied to selecting the best candidate.
 Thus
 \begin{equation}
 P(R_{\tau'}=1)\le \sup_{\nu \in \mathcal{M}_n} P(R_{\nu}=1)= p(1,n). \label{e00}
 \end{equation}
Note that for $j=1,\dots,n$,
\begin{align}
P(R_j=1,X_j=1)=\frac{1}{n}
&=P(R_j=k)\notag\\
&\geq P(R_j=k,X_j\in S_{j}).\label{e1}
\end{align}
By (\ref{v11}), given $X_1=x_1,\dots, X_j=x_j$, the conditional distribution of $R_j$ depends  only on $x_j$, implying that $X_1,\dots, X_{j-1}$ and $(X_j,R_j)$ are independent. So
 if $S_j\ne \emptyset$,
\begin{align}
P(\tau=j,R_j=k)&=P(X_i\notin S_i,\;i=1,\dots,j-1,\;X_j\in S_j,R_j=k)\notag\\
&=\left[\prod_{i=1}^{j-1}P(X_i\notin S_i)\right]  P(X_j\in S_j, R_j=k)\notag\\
&\le\left[\prod_{i=1}^{j-1}P(X_i\notin S_i')\right] P(X_j=1,R_j=1)\label{e2}\\
&=P(\tau'=j,R_j=1)\notag,
\end{align}
where the inequality follows from  (\ref{e1}) and  $|S_i'|\le|S_i|$ for all $i$.  (If $S_j=\emptyset$, then
$P(\tau=j,R_j=k)=P(\tau'=j,R_j=1)=0$.) By (\ref{e0}), (\ref{e00}) and (\ref{e2}),  we have
\begin{align}
p(k,n)=P(R_\tau=k)
&=\sum_{j=1}^nP(\tau=j,R_j=k)\notag\\
&\le\sum_{j=1}^nP(\tau'=j,R_j=1)=P(R_{\tau'}=1)\le p(1,n).\label{e000}
\end{align}

It remains to show that (at least) one of the two inequalities in (\ref{e000}) is strict (so that $p(k,n)<p(1,n)$).
If the stopping rule $\tau'$ is not optimal for selecting the best candidate, then the second inequality in (\ref{e000}) is strict.
 Suppose $\tau'$ is optimal for selecting the best candidate, which implies, in view of $n \ge 3$, that $S_1'=\emptyset$
and $S_{n-1}'=\{1\}$, which in turn implies that $|S_{n-1}|\ge 1$. If $|S_{n-1}|\ge 2$, then the inequality in (\ref{e2})
is strict for $j=n$, implying that the first inequality in (\ref{e000}) is strict.
Suppose $S_{n-1}=\{\ell\}$ for some $\ell$. Then we have
\begin{equation*}
P(R_{n-1}=k, X_{n-1}=\ell)=
\begin{cases}
\frac{n-k}{n(n-1)},&\;\;\mbox{if}\;k=\ell;\\
\frac{k-1}{n(n-1)},&\;\;\mbox{if}\;k=\ell+1;\\
0,&\;\;\mbox{if}\;k-\ell\ne 0,1;
\end{cases}
\end{equation*}
implying, in view of $1<k<n$, that the inequality in (\ref{e1}) is strict for $j=n-1$, which in turn implies that the inequality in (3.4)
is strict for $j=n-1$. It follows  that the first inequality in (\ref{e000}) is strict. The proof is complete.
\end{proof}

\begin{thm}
For $1\le k \le n$, we have $p(k,n)\ge p(k,n+1)\; (\mbox{i.e.}\; p(k,n) \;\mbox{is decreasing in}\; n)$ and $p(k,n)\ge p(k+1,n+1)$. Furthermore,
$p(k,\infty):=\lim_{n \to \infty} p(k,n)$ is well defined, and $p(k,\infty)\ge p(k+1,\infty)$.
\end{thm}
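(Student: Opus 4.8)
The statement has three parts, and the plan is to observe at the outset that the last two follow quickly once the first is in hand. For the convergence and the limiting inequality: the first assertion says $p(k,n)$ is nonincreasing in $n$, and since it is a probability it lies in $[0,1]$, so $p(k,\infty):=\lim_{n\to\infty}p(k,n)$ exists as a monotone bounded limit. Letting $n\to\infty$ in $p(k,n)\ge p(k+1,n+1)$ then yields $p(k,\infty)\ge p(k+1,\infty)$, since $\{p(k+1,n+1)\}_n$ is a tail of the sequence defining $p(k+1,\infty)$. Moreover, the inequality $p(k,n)\ge p(k+1,n+1)$ itself reduces to the monotonicity $p(k,n)\ge p(k,n+1)$ via the reflection symmetry $p(j,m)=p(m-j+1,m)$ recorded in the proof of Theorem~\ref{t1}: applying monotonicity with $j=n-k+1$ (which lies in $\{1,\dots,n\}$precisely when $1\le k\le n$),
\begin{equation*}
p(k+1,n+1)=p(n-k+1,n+1)\le p(n-k+1,n)=p(k,n).
\end{equation*}
Thus everything hinges on the single inequality $p(k,n)\ge p(k,n+1)$.

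To prove $p(k,n)\ge p(k,n+1)$ I would use a coupling that turns the $(n+1)$-candidate problem into the $n$-candidate problem by inserting a phantom ``worst'' candidate at a uniformly random slot. Two elementary facts drive the argument. First, placing a new globally worst candidate into a uniformly random one of the $n+1$ gaps of a uniform permutation of $\{1,\dots,n\}$ produces a uniform permutation of $\{1,\dots,n+1\}$. Second, because the inserted candidate has the largest absolute rank, it is never counted in any other candidate's relative rank (recall $X_j=|\{i\le j:R_i\le R_j\}|$); hence the relative ranks of the $n$ real candidates are left unchanged, while the phantom receives relative rank equal to its position. Consequently an observer of the $n$-problem who has drawn an independent uniform slot $W\in\{1,\dots,n+1\}$ can reconstruct, in a non-anticipating way, the relative-rank sequence of a genuine $(n+1)$-problem: feed the real relative ranks $X_1,\dots,X_n$ in order and splice in the value $W$ at position $W$.

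The strategy for the $n$-problem is then to run an optimal $(n+1)$-rule $\tau$ on this simulated sequence: when $\tau$ stops at a position corresponding to a real candidate, stop there, and when $\tau$ stops at the phantom, fail. Whenever the simulation selects a candidate of absolute rank $k$, that candidate is necessarily real (since $k\le n$ while the phantom has rank $n+1$), and its rank among the $n$ real candidates is again $k$ (deleting the worst preserves ranks $1,\dots,n$); so a simulated success forces a real success. Because the averaged simulation carries exactly the law of an $(n+1)$-problem, the randomized $n$-strategy succeeds with probability at least $p(k,n+1)$. On the other hand, for each fixed slot $w$ the corresponding rule is a genuine (non-randomized) member of $\mathcal{M}_n$, so its success probability is at most $p(k,n)$; averaging over $w$ gives $p(k,n+1)\le p(k,n)$.

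The main obstacle, and the step I would write out most carefully, is the verification that this coupling is legitimate: that the spliced sequence is truly adapted (the phantom's relative rank is the deterministic value $W$, so it can be processed after observing $X_1,\dots,X_{W-1}$ and before $X_W$) and that, after averaging over $W$, it carries exactly the $(n+1)$-problem distribution on relative ranks. The two permutation-combinatorial facts above are the heart of this, and the bookkeeping of the index shift (a real candidate at position $j\ge W$ occupies simulated position $j+1$) must be handled with care. The remaining points---that stopping at the phantom is always a failure for $k\le n$ and is therefore harmless, and the standard reduction from a randomized strategy to a deterministic one---are routine.
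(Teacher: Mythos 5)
Your proof is correct and takes essentially the same route as the paper: parts (ii) and (iii) are reduced to the single inequality $p(k,n)\ge p(k,n+1)$ exactly as in the paper (via the symmetry $p(k,n)=p(n-k+1,n)$ and passage to the limit), and that inequality rests on the same coupling through the position of the worst of the $n+1$ candidates, with the same key observation that deleting/inserting the worst candidate leaves the remaining relative ranks unchanged. The paper states this as ``revealing the position $I$ of the worst candidate reduces the $(n+1)$-problem to the $n$-problem, and extra information cannot decrease the optimal value,'' while you run the identical coupling in the reverse (simulation) direction by inserting a phantom worst candidate at an independent uniform slot; this is the same argument, spelled out in somewhat more detail.
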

\begin{proof}
(i) To show $p(k,n)\ge p(k,n+1)$, consider the case of selecting the $k$-th best of $n+1$ candidates. Let
the random variable $I \in \{1,\dots, n+1\}$ be such that $R_I=n+1$ (i.e. the worst candidate is the $I$-th person to be interviewed). If $I$ is known to the manager (or more precisely, the manager knows the position of the worst candidate before the interview process begins), then
the problem of optimally selecting the $k$-th best of the $n+1$ candidates is equivalent  to that of optimally
selecting the $k$-th best of the $n$ candidates (excluding the worst one).
(Indeed, let $X_i'=X_i$ for $1\le i<I$ and $X_i'=X_{i+1}$ for $I\le i\le n$. Given $I$, $X_1',\dots, X_n'$ are (conditionally) independent
with each $X_i'$ being uniform over $\{1,\dots,i\}$.)
Thus, when $I$ is known to the manager,
the maximum probability
of selecting the $k$-th best candidate equals $p(k,n)$, which must be at least as large as $p(k,n+1)$, the maximum probability
of selecting the $k$-th best of the $n+1$ candidates when  $I$ is unavailable. This proves that $p(k,n)\ge p(k,n+1)$.

(ii) To show $p(k,n)\ge p(k+1,n+1)$, note that
\begin{equation}
p(k,n)=p(n-k+1,n)\ge p(n+1-k,n+1)=p(k+1,n+1),\label{e12}
\end{equation}
where the two equalities follow from the symmetry property $p(k,n)=p(n-k+1,n)$ and the inequality follows from
the decreasing property of $p(k,n)$ in $n$.

(iii) Since $p(k,n)$ is decreasing in $n$, $p(k,\infty):=\lim_{n \to \infty} p(k,n)$ is well defined.
By (\ref{e12}), we have
$$
p(k,\infty)=\lim_{n\to\infty}p(k,n)\ge\lim_{n\to\infty}p(k+1,n+1)=p(k+1,\infty).
$$
The proof is complete.
\end{proof}
\begin{rem}
We conjecture that the three inequalities in Theorem 3.2 are all strict.  While $p(k,n)$ is decreasing in $n$,
in view of $p(1,n)>p(k,n)$ for $1<k<n$ and $p(k,\infty)\ge p(k+1,\infty)$, it may be tempting to conjecture that $p(k,n)\ge p(k+1,n)$
for $1\le k <\lceil\frac{n}{2}\rceil$. However, this inequality occasionally fails to hold for $k$ close to $(\mbox{but less than})$ $\lceil\frac{n}{2}\rceil$.
Our numerical results show that the set  $\{(k,n): 1\le k <\lceil\frac{n}{2}\rceil, n\le 50, p(k,n)<p(k+1,n)\}$ consists of $(2,5),(2,7),(7,15),(9,19),(10,21),(12,25),(21,43),(22,47),(24,49)$ and $(24,50)$.
Moreover, it can be shown that $p(2,n)>p(3,n)$ for all $n\ge 8$.  Let $\rho=\liminf_{n \to \infty} K(n)/n$
  where $K(n)=\max\{1\le k \le \lceil\frac{n}{2}\rceil: p(1,n)\ge p(2,n)\ge \cdots \ge p(k,n)\}$. While $0\le \rho \le 1/2$, it appears to be a
  challenging task to find the exact value of $\rho$. Our limited
  numerical results suggest that $\rho$ may be equal to $1/2$.
\end{rem}
\begin{rem}
It may be of interest to see how fast $p(k,\infty)$ tends to $0$ as $k$ increases. By considering some suboptimal rules, we
have derived a crude lower bound $k^{\frac{-k}{k-1}}$ for $p(k,\infty)$. The details are omitted.
\end{rem}
The next theorem extends Theorem \ref{t1} to the setting where the goal is to select a candidate whose rank belongs to a prescribed subset
$\Gamma$ of $\{1,\dots,n\}$ ({\it cf.} Suchwalko and Szajowski \cite{ref25}). Let $$p(\Gamma,n)=\sup_{\tau \in \mathcal{M}_n} P(R_\tau \in \Gamma).$$

\begin{thm}\label{t2}
For any subset $\Gamma$ of $\{1,2,\dots,n\}$ with $|\Gamma|=c\;\;(1\le c <n)$, we have
$$
p(\Gamma,n)\le p(\{1,2,\dots,c\},n)=p(\{n-c+1,\dots,n\},n).
$$

\end{thm}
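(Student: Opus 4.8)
```latex
The plan is to generalize the coupling/domination argument used in the proof of Theorem \ref{t1} from the single-rank target $\{k\}$ to an arbitrary target set $\Gamma$ of size $c$, and to exploit the freedom in choosing $\Gamma$. As before, fix a (non-randomized) optimal rule $\tau=\min\{j:X_j\in S_j\}$ for the target $\Gamma$, determined by subsets $S_j\subset\{1,\dots,j\}$ with $S_n=\{1,\dots,n\}$, so that $P(R_\tau\in\Gamma)=p(\Gamma,n)$. The key quantity is the conditional probability $q_j(x_j):=P(R_j\in\Gamma\mid X_j=x_j)=\sum_{k\in\Gamma}\binom{k-1}{x_j-1}\binom{n-k}{j-x_j}/\binom{n}{j}$, obtained by summing (\ref{v11}) over $k\in\Gamma$. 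By the same independence observation used before (given $X_1=x_1,\dots,X_j=x_j$ the conditional law of $R_j$ depends only on $x_j$), we have the product decomposition
\begin{equation*}
P(\tau=j,R_j\in\Gamma)=\Bigl[\prod_{i=1}^{j-1}P(X_i\notin S_i)\Bigr]\,P(X_j\in S_j)\,q_j^*,
\end{equation*}
where $q_j^*$ is a suitable conditional average of $q_j$ over $S_j$; summing over $j$ gives $p(\Gamma,n)=P(R_\tau\in\Gamma)$.

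The heart of the argument is a majorization/rearrangement step. First I would establish that $\sum_{x=1}^{j}q_j(x)=P(R_j\in\Gamma)=c/n$ for every $j$, independent of $\Gamma$ (each absolute rank is equally likely to be the $j$-th interviewee). Thus replacing the target set $\Gamma$ affects only the \emph{distribution} of the per-relative-rank rewards $q_j(\cdot)$ across $x\in\{1,\dots,j\}$, never their total. The optimal rule for any $\Gamma$ stops when the observed $q_j(X_j)$ is large, so intuitively one wants the reward mass $c/n$ to be as concentrated as possible on a few favorable relative ranks. I would show that the extreme choices $\Gamma=\{1,\dots,c\}$ and $\Gamma=\{n-c+1,\dots,n\}$ are precisely the ones that concentrate the reward: for $\Gamma=\{1,\dots,c\}$ one computes $q_j(x)=\sum_{k=1}^{c}\binom{k-1}{x-1}\binom{n-k}{j-x}/\binom{n}{j}=\binom{c}{x}\binom{n-c}{j-x}/\binom{n}{j}$ by the Vandermonde/hockey-stick identity, which (for fixed $j$) is, as a function of $x$, the hypergeometric probability mass and is \emph{unimodal} and maximized at the smallest feasible $x$. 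The claim is that this $q_j(\cdot)$ majorizes the reward profile coming from any other $\Gamma$ of the same size, in the sense that its decreasing rearrangement dominates.

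To convert majorization into an inequality between optimal values, I would argue that the optimal return is a Schur-convex, or at least monotone-under-majorization, functional of the reward profiles $(q_j(\cdot))_{j=1}^{n}$. Concretely, given the optimal subsets $\{S_j\}$ for a general $\Gamma$, I would build a competing rule for the target $\{1,\dots,c\}$ by selecting, at each stage $j$ with $S_j\neq\emptyset$, the $|S_j|$ relative ranks carrying the largest values of the concentrated reward $\tilde q_j(x)=\binom{c}{x}\binom{n-c}{j-x}/\binom{n}{j}$. Since the two profiles have equal total mass $c/n$ at each stage while $\tilde q_j$ is the decreasing rearrangement, the top-$|S_j|$ mass under $\tilde q_j$ is at least the $S_j$-mass under $q_j$; combined with the fact that the continuation weights $\prod_{i<j}P(X_i\notin S_i')\le\prod_{i<j}P(X_i\notin S_i)$ can only be made more favorable by shrinking stopping sets, the telescoping sum yields $p(\Gamma,n)\le p(\{1,\dots,c\},n)$, exactly as the chain (\ref{e000}) did for $c=1$. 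The equality $p(\{1,\dots,c\},n)=p(\{n-c+1,\dots,n\},n)$ follows from the reflection symmetry $R_j\mapsto n+1-R_j$ (equivalently the $p(k,n)=p(n-k+1,n)$ symmetry noted in Theorem \ref{t1}).

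The main obstacle is making the majorization step fully rigorous when $|S_j|>1$, since then one is comparing the $S_j$-sum of $q_j$ against a \emph{top-$|S_j|$} sum of $\tilde q_j$, and one must verify the partial-sum inequalities $\sum_{x\in A}q_j(x)\le \sum_{x=1}^{|A|}\tilde q_j^{\downarrow}(x)$ uniformly over all $A\subset\{1,\dots,j\}$ and all feasible $\Gamma$. This is a genuine majorization claim at each stage $j$, and proving that the hypergeometric profile $\tilde q_j$ majorizes $q_j$ for arbitrary $\Gamma$ of size $c$ requires a careful combinatorial comparison of the partial sums of the quantities $\sum_{k\in\Gamma}\binom{k-1}{x-1}\binom{n-k}{j-x}$. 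I expect this to be where the work concentrates; the surrounding coupling argument is a routine adaptation of the $c=1$ proof.
```
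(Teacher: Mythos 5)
Your outline reproduces the skeleton of the paper's argument: fix an optimal rule $\tau=\min\{j:X_j\in S_j\}$, replace each $S_j$ by the set of the $|S_j|$ most favorable relative ranks for the target $\{1,\dots,c\}$ (the paper takes $S_j'=\{1,\dots,|S_j|\}$, which is the same thing since the reward profile for $\{1,\dots,c\}$ is decreasing in the relative rank), use the independence of $X_1,\dots,X_{j-1}$ from $(X_j,R_j)$ to factor $P(\tau=j,R_j\in\Gamma)$, note that $P(X_i\notin S_i')=P(X_i\notin S_i)$ because $|S_i'|=|S_i|$ and $X_i$ is uniform (no ``shrinking'' is involved here, unlike in Theorem \ref{t1}), and sum over $j$. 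All of this is the easy part. The entire content of the theorem is the per-stage domination inequality that you state as $\sum_{x\in A}q_j(x)\le\sum_{x=1}^{|A|}\tilde q_j^{\downarrow}(x)$ and explicitly leave unproven; this is precisely the paper's claim (\ref{e6}), equivalently the combinatorial inequality (\ref{e7})
\begin{equation*}
\sum_{i=1}^d\sum_{\ell=1}^c{t_\ell-1\choose s_i-1}{n-t_\ell\choose j-s_i}\le\sum_{i=1}^d\sum_{\ell=1}^c{\ell-1\choose i-1}{n-\ell\choose j-i},
\end{equation*}
which the paper proves by induction on $n$, splitting on whether $(n-t_c+1,j-s_d)=(0,0)$, applying Pascal's rule to each double sum, and invoking a Chu--Vandermonde-type identity (\ref{e313}). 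Since you offer no proof of this majorization claim and correctly identify it as ``where the work concentrates,'' the proposal has a genuine gap: the one nontrivial step is missing.

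Two smaller points. First, your closed form $q_j(x)=\binom{c}{x}\binom{n-c}{j-x}/\binom{n}{j}$ for $\Gamma=\{1,\dots,c\}$ is wrong: $\sum_{k=1}^{c}\binom{k-1}{x-1}\binom{n-k}{j-x}$ counts $j$-subsets of $\{1,\dots,n\}$ whose $x$-th smallest element is at most $c$, i.e.\ it equals the \emph{tail sum} $\sum_{y\ge x}\binom{c}{y}\binom{n-c}{j-y}$, not the hypergeometric pmf (check $n=3$, $j=2$, $x=1$, $c=2$: the left side is $3$, your right side is $2$). The resulting profile is still decreasing in $x$ (tail sums always are), so your identification of $\{1,\dots,|S_j|\}$ as the optimal replacement survives, but the pmf/unimodality discussion is off. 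Second, the reduction to the case $d<c$ (the case $d\ge c$ being trivial since both sides are then $c/n$ times a continuation factor) and the bookkeeping needed to let the induction hypothesis apply when indices fall out of range are handled carefully in the paper and would need to be addressed in any completed version of your argument.
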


In the proof below, it is convenient to take the convention that ${0\choose 0}:=1$ and ${n\choose k}:=0$ if $n<k$ or $n<0$ or $k<0$, so that
\begin{equation}\label{e3}
{n\choose k}={n-1\choose k}+{n-1\choose k-1}\;\;\mbox{for}\;\;(k,n)\in\mathbb{Z}\times\mathbb{Z}\backslash\{(0,0)\},
\end{equation}
and
\begin{equation}\label{e4}
{n\choose k}\ge{n-1\choose k}+{n-1\choose k-1}\;\;\mbox{for}\;\;(k,n)\in\mathbb{Z}\times\mathbb{Z},
\end{equation}
where $\mathbb{Z}$ is the set of all integers.

\begin{proof}[\bf Proof of Theorem \ref{t2}]
As in  the proof of Theorem \ref{t1}, let $\tau$ be a (non-randomized) optimal stopping rule determined by a sequence of
subsets $\{S_j\}$ of $\{1,\dots,n\}$ such that $S_j \subset \{1,\dots,j\}$, $\tau=\min\{j:X_j \in S_j\}$ and $P(R_{\tau} \in \Gamma)=p(\Gamma,n)$.
Again, as stopping at $n$ is enforced (if $\tau>n-1$), we may assume that $S_n=\{1,2,\dots,n\}$.
Let $S_j'=\{1,2,\dots, |S_j|\}$, so $|S_j'|=|S_j|$ (in particular, $S_j'=\emptyset$ if $S_j=\emptyset$). Let $\tau'=\min\{j: X_j \in S_j'\}$.
Claim
\begin{equation}\label{e6}
P(R_j\in\{t_1,t_2,\dots,t_c\},X_j\in\{s_1,s_2,\dots,s_d\})\le P(R_j\in\{1,2,\dots,c\},X_j\in\{1,2,\dots,d\})
\end{equation}
for $1\le d \le j\le n$, $1\le c\le n, 1\le t_1<t_2<\dots<t_c\le n$, and $1\le s_1<s_2<\dots<s_d\le j$.
If the claim (\ref{e6}) is true, then for $j=1,\dots,n$,
\begin{align}
P(\tau=j,R_j\in \Gamma)&=P(X_i\notin S_i,\;i=1,\dots,j-1,\;X_j\in S_j,R_j\in \Gamma)\notag\\
&=\left[\prod_{i=1}^{j-1}P(X_i\notin S_i)\right]  P(R_j \in \Gamma, X_j\in S_j)\notag\\
&\le\left[\prod_{i=1}^{j-1}P(X_i\notin S_i')\right] P(R_j \in \{1,\dots,c\}, X_j\in S_j')\;\;(\mbox{by}\;(\ref{e6}))\notag\\
&=P(X_i\notin S_i', i=1,\dots, j-1, X_j \in S_j', R_j \in \{1,\dots,c\})\notag\\
&=P(\tau'=j,R_j\in \{1,\dots,c\})\notag,
\end{align}
implying that $p(\Gamma,n)=P(R_\tau \in \Gamma)\le  P(R_{\tau'}\in \{1,\dots,c\})\le p(\{1,\dots,c\},n)$.

It remains to establish  (\ref{e6}). Note that
\begin{align*}
P(R_j \in \{t_1,\dots, t_c\}, & \; X_j \in \{s_1,\dots,s_d\})\\
& \le P(R_j \in \{t_1,\dots, t_c\})=\frac{c}{n}\\
&=P(R_j \in \{1,\dots,c\})\\
&=P(R_j \in \{1,\dots,c\}, X_j \in \{1,\dots,d\})\;\;\mbox{(if}\; d\ge c),
\end{align*}
showing that (\ref{e6}) holds for $d\ge c$.
Since
$$
P(R_j=a,X_j=b)=\frac{{a-1\choose b-1}{n-a\choose j-b}}{n{n-1\choose j-1}}\;\;\mbox{for all integers}\;\;a>0, b>0,
$$
(\ref{e6}) is equivalent to
\begin{equation}\label{e7}
\sum_{i=1}^d\sum_{\ell=1}^c{t_\ell-1\choose s_i-1}{n-t_\ell\choose j-s_i}\le\sum_{i=1}^d\sum_{\ell=1}^c{\ell-1\choose i-1}{n-\ell\choose j-i},
\end{equation}
for $1\le d\le j \le n, 1\le c\le n, 1\le t_1<\cdots<t_c\le n$ and $1\le s_1<\cdots<s_d\le j$.
Note that (\ref{e7}) holds for $d \ge c$ (since (\ref{e6})  does for $d\ge c$).
Also, from ${n-t_\ell\choose j-s_i}=0$ for $t_\ell>n$ or $s_i> j$, it follows easily that for fixed $n$, if (\ref{e7}) holds for all
$(j,c,d, t_1,\dots, t_c, s_1,
\dots,s_d)$ with
 $1\le d\le j \le n, 1\le c\le n, 1\le t_1<\cdots<t_c\le n$ and $1\le s_1<\cdots<s_d\le j$, then (\ref{e7}) holds for all
 $(j,c,d,t_1,\dots,t_c,s_1,\dots,s_d)$ with $1\le j \le n, 1\le t_1<\cdots<t_c$ and $1\le s_1<\cdots<s_d$. This (trivial) observation is needed later.
  To prove (\ref{e7}),  we proceed by induction on $n$. For $n=1$, necessarily $j=1$ and $c=d=1$ (since $1\le d\le j\le n$
and $1\le c\le n$). So (\ref{e7}) holds for $n=1$.

Suppose (\ref{e7}) holds for (fixed) $n\ge1$ and for all $(j,c,d,t_1,\dots,t_c,s_1,\dots,s_d)$ with $1\le d\le j \le n, 1\le c\le n, 1\le t_1<\cdots<t_c\le n$ and $1\le s_1<\cdots<s_d\le j$ (and hence
for all $(j,c,d,t_1,\dots,t_c,s_1,\dots,s_d)$ with
$1\le j \le n, 1\le t_1<\cdots<t_c$ and $1\le s_1<\cdots<s_d$).
We need to show that (\ref{e7}) holds for $n+1$ (with $1\le d<c$), i.e.
\begin{equation}\label{e8}
\sum_{i=1}^d\sum_{\ell=1}^c{t_\ell-1\choose s_i-1}{n-t_\ell+1\choose j-s_i}\le\sum_{i=1}^d\sum_{\ell=1}^c{\ell-1\choose i-1}{n-\ell+1\choose j-i},
\end{equation}
for $1\le d \le j\le n+1, 1\le d < c \le n+1$, $1\le t_1<t_2<\dots<t_c\le n+1$ and $1\le s_1<s_2<\dots<s_d\le j$. If $j=1$, then
necessarily $d=1$ and $s_1=1$, so that both sides of (\ref{e8}) equal $c$, implying that   (\ref{e8}) holds for $j=1$. For $j=n+1$, the left hand side of (\ref{e8}) equals
$$
\sum_{i=1}^d\sum_{\ell=1}^c{t_\ell-1\choose s_i-1}{n-t_\ell+1\choose n-s_i+1}\le d,
$$
since the two inequalities $t_\ell-1\ge s_i-1$  and $n-t_\ell+1\ge n-s_i+1$  hold simultaneously if and only if $t_\ell=s_i$.
 The right hand side of (\ref{e8}) equals
$$
\sum_{i=1}^d\sum_{\ell=1}^c{\ell-1\choose i-1}{n-\ell+1\choose n-i+1}=d,
$$
since ${\ell-1\choose i-1}{n-\ell+1\choose n-i+1}=1$ or $0$ according to whether $i=\ell$ or $i \ne \ell$.
Thus, (\ref{e8}) holds for $j=n+1$.

We now consider $2\le j\le n$. Suppose $n-t_{c}+1=j-s_{d}=0$. Then the left hand side of (\ref{e8}) equals
\begin{align}
&\sum_{i=1}^{d}\sum_{\ell=1}^{c-1}{t_\ell-1\choose s_i-1}{n-t_\ell+1\choose j-s_i}+{n\choose j-1}\notag\\
=&\sum_{i=1}^{d}\sum_{\ell=1}^{c-1}{t_\ell-1\choose s_i-1}\left[{n-t_\ell\choose j-s_i}+{n-t_\ell\choose j-s_i-1}\right]+{n\choose j-1}\;\;(\mbox{by}\;(\ref{e3}))\notag\\
=&\sum_{i=1}^{d}\sum_{\ell=1}^{c-1}{t_\ell-1\choose s_i-1}{n-t_\ell\choose j-s_i}+\sum_{i=1}^{d-1}\sum_{\ell=1}^{c-1}{t_\ell-1\choose s_i-1}{n-t_\ell\choose (j-1)-s_i}+{n\choose j-1}.\label{e9}
\end{align}
By the induction hypothesis (applied to each of the two double sums), (\ref{e9}) is less than or equal to
\begin{align*}
&\sum_{i=1}^d\sum_{\ell=1}^{c-1}{\ell-1\choose i-1}{n-\ell\choose j-i}+\sum_{i=1}^{d-1}\sum_{\ell=1}^{c-1}{\ell-1\choose i-1}
{n-\ell\choose (j-1)-i}+{n\choose j-1}\\
=&\sum_{i=1}^{d-1}\sum_{\ell=1}^{c-1}{\ell-1\choose i-1}\left[{n-\ell\choose j-i}+{n-\ell\choose j-i-1}\right]+\sum_{\ell=1}^{c-1}{\ell-1\choose d-1}{n-\ell\choose j-d}+{n\choose j-1},
\end{align*}
which by (\ref{e3}) is  equal to
\begin{align}
&\sum_{i=1}^{d-1}\sum_{\ell=1}^{c-1}{\ell-1\choose i-1}{n-\ell+1\choose j-i}+\sum_{\ell=d}^{c-1}{\ell-1\choose d-1}{n-\ell\choose j-d}+{n\choose j-1}\label{e10}.
\end{align}
We need the following identity
\begin{equation}\label{e313}
\sum_{i=d+1}^{c}{c-1\choose i-1}{n-c+1\choose j-i}=\sum_{\ell=d}^{c-1}{\ell-1\choose d-1}{n-\ell\choose j-d-1},
\end{equation}
which holds by observing that the left hand side is the total number of subsets of $\{1,\dots,n\}$ with $j-1$ elements and with the $d$-th smallest element less than $c$ while the term ${\ell-1\choose d-1}{n-\ell\choose j-d-1}$ on the right hand side is the number of subsets of $\{1,\dots,n\}$ with $j-1$ elements
and with the $d$-th smallest element being $\ell$.
 In view of (\ref{e313}),
\begin{align}
{n\choose j-1}&=\sum_{i=1}^{d}{c-1\choose i-1}{n-c+1\choose j-i}+\sum_{i=d+1}^{c}{c-1\choose i-1}{n-c+1\choose j-i}\notag\\
&=\sum_{i=1}^{d}{c-1\choose i-1}{n-c+1\choose j-i}+\sum_{\ell=d}^{c-1}{\ell-1\choose d-1}{n-\ell\choose j-d-1}.\label{e314}
\end{align}
We have shown that  the left hand side of (\ref{e8}) is less than  or equal to (\ref{e10}), which by (\ref{e314}) equals
\begin{align*}
&\sum_{i=1}^{d-1}\sum_{\ell=1}^{c-1}{\ell-1\choose i-1}{n-\ell+1\choose j-i}+\sum_{\ell=d}^{c-1}{\ell-1\choose d-1}\left[{n-\ell\choose
j-d}+{n-\ell\choose j-d-1}\right]+\sum_{i=1}^{d}{c-1\choose i-1}{n-c+1\choose j-i}\\
=&\sum_{i=1}^{d-1}\sum_{\ell=1}^{c-1}{\ell-1\choose i-1}{n-\ell+1\choose j-i}+\sum_{\ell=d}^{c-1}{\ell-1\choose d-1}{n-\ell+1\choose
j-d}+\sum_{i=1}^{d}{c-1\choose i-1}{n-c+1\choose j-i}\;\;\mbox{(by\;(\ref{e3}))}\\
=&\sum_{i=1}^d\sum_{\ell=1}^c{\ell-1\choose i-1}{n-\ell+1\choose j-i},
\end{align*}
establishing (\ref{e8}) for the case that $2\le j \le n$ and $n-t_c+1=j-s_d=0$.

It remains to deal with the case that $2\le j\le n$ and $(n-t_c+1,j-s_d)\neq(0,0)$
(implying that $(n-t_\ell+1,j-s_i) \ne (0,0)$ for all $i, \ell$). By (\ref{e3}), the left hand side of (\ref{e8}) equals
\begin{align*}
&\sum_{i=1}^d\sum_{\ell=1}^c{t_\ell-1\choose s_i-1}{n-t_\ell\choose j-s_i}+\sum_{i=1}^d\sum_{\ell=1}^c
{t_\ell-1\choose s_i-1}{n-t_\ell\choose (j-1)-s_i}\\
\le&\sum_{i=1}^d\sum_{\ell=1}^c{\ell-1\choose i-1}{n-\ell\choose j-i}+\sum_{i=1}^d\sum_{\ell=1}^c{\ell-1\choose i-1}{n-\ell\choose (j-1)-i}\;\;\mbox{(by the induction hypothesis)}\\
=&\sum_{i=1}^d\sum_{\ell=1}^c{\ell-1\choose i-1}\left[{n-\ell\choose j-i}+{n-\ell\choose j-i-1}\right]\\
\le&\sum_{i=1}^d\sum_{\ell=1}^c{\ell-1\choose i-1}{n-\ell+1\choose j-i}\;\;\mbox{(by (\ref{e4}))}.
\end{align*}
Note that the first inequality follows from the induction hypothesis  applied to each of the two double sums
where $t_c>n$ or $s_d>j-1$ is possible. (Recall that the induction hypothesis applies to all
$(j,c,d,t_1,\dots,t_c,s_1,\dots,s_d)$ with $1\le j\le n, 1\le t_1<\cdots<t_c$ and $1\le s_1<\cdots<s_d$.) The proof is complete.
\end{proof}

\begin{rem}
As pointed out by a referee, the identities $(\ref{e313})$ and $(\ref{e314})$ are variants of Chu-Vandermonde convolution formula. $($See the first identity in Table 169 of Graham et al. \cite{ref0}.$)$
\end{rem}

\section{Proofs of Lemmas \ref{sl4.2}--\ref{sl4.7} and (\ref{b5})--(\ref{b6})}
\hspace*{18pt}To prove Lemmas 2.1--2.5, we need the following lemma.
\begin{lem}\label{sl4.1}
For $n\ge32$, we have
\begin{equation}\label{ae7}
\frac{n-1}{\sqrt{e}}+1<b_n<\frac{n-\frac{3}{2}}{\sqrt{e}}+\frac{5}{2}.
\end{equation}
In particular,
\begin{equation}\label{ae8}
\frac{n+5}{2}<b_n<\frac{2n-1}{3}.
\end{equation}
\end{lem}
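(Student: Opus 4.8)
The plan is to convert the defining minimality condition for $b_n$ into a two-sided harmonic-sum inequality and then sandwich $b_n$ using integral comparisons, taking care to make the upper comparison sharp enough. First I would record the identity
\[
S(j) := \sum_{i=j+1}^{n}\frac{1}{i-2} = \sum_{m=j-1}^{n-2}\frac{1}{m},
\]
obtained by the substitution $m = i-2$. Since $b_n$ is the least index $j \in \{2,\dots,n\}$ with $S(j) \le \tfrac12$, and since $S(2) = \sum_{m=1}^{n-2}\frac1m > \tfrac12$ rules out $b_n = 2$ while $S(n-1) = \frac{1}{n-2} < \tfrac12$ (for $n \ge 32$) rules out $b_n = n$, one has $3 \le b_n \le n-1$ and the key bracketing $S(b_n) \le \tfrac12 < S(b_n-1)$. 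Both relevant summation ranges are then nonempty.

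For the lower bound in (\ref{ae7}), I would estimate $S(b_n)$ from below. As $1/x$ is decreasing, $\frac1m > \int_m^{m+1}\frac{dx}{x} = \ln\frac{m+1}{m}$, so telescoping over $m = b_n-1,\dots,n-2$ gives $S(b_n) > \ln\frac{n-1}{b_n-1}$. Combined with $S(b_n) \le \tfrac12$ this yields $\ln\frac{n-1}{b_n-1} < \tfrac12$, i.e. $b_n - 1 > \frac{n-1}{\sqrt e}$, which is exactly $b_n > \frac{n-1}{\sqrt e}+1$.

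The upper bound is the delicate step, and I expect it to be the main obstacle: the crude estimate $\frac1m < \int_{m-1}^m \frac{dx}{x}$ only gives $S(b_n-1) < \ln\frac{n-2}{b_n-3}$ and hence the too-weak conclusion $b_n < \frac{n-2}{\sqrt e}+3$. To recover the stated constants I would instead use the midpoint (convexity) refinement: since $1/x$ is strictly convex, the midpoint rule strictly underestimates the integral, so $\frac1m < \int_{m-1/2}^{m+1/2}\frac{dx}{x} = \ln\frac{m+1/2}{m-1/2}$. Telescoping over $m = b_n-2,\dots,n-2$ gives $S(b_n-1) < \ln\frac{n-3/2}{b_n-5/2}$; with $S(b_n-1) > \tfrac12$ and $b_n - \tfrac52 > 0$ this forces $b_n - \tfrac52 < \frac{n-3/2}{\sqrt e}$, completing (\ref{ae7}).

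Finally, (\ref{ae8}) follows from (\ref{ae7}) by elementary algebra, and this is where the hypothesis $n \ge 32$ is pinned down. To get $\frac{n+5}{2} < b_n$ it suffices that $\frac{n-1}{\sqrt e}+1 \ge \frac{n+5}{2}$, i.e. $(2-\sqrt e)n \ge 2+3\sqrt e$, which holds for all $n \ge 20$. To get $b_n < \frac{2n-1}{3}$ it suffices that $\frac{n-3/2}{\sqrt e}+\frac52 \le \frac{2n-1}{3}$, i.e. $(4\sqrt e - 6)n \ge 17\sqrt e - 9$; since $4\sqrt e - 6 > 0$, this is equivalent to $n \ge \frac{17\sqrt e - 9}{4\sqrt e - 6} \approx 31.98$, so it holds precisely for $n \ge 32$. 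Apart from the sharpness needed in the upper estimate, every step is routine bookkeeping with telescoping sums and monotone/convex comparisons.
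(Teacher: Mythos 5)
Your proof is correct and takes essentially the same route as the paper's: the same bracketing $\sum_{i=b_n+1}^n\frac{1}{i-2}\le\frac12<\sum_{i=b_n}^n\frac{1}{i-2}$, the same comparison $\frac1m>\ln\frac{m+1}{m}$ for the lower bound, the same midpoint/convexity refinement $\frac1m<\ln\frac{m+1/2}{m-1/2}$ for the upper bound, and the same elementary algebra to pass to (\ref{ae8}). You are in fact a bit more explicit than the paper about the edge case $b_n\ge3$ and about why $n\ge32$ is precisely the threshold needed for $b_n<\frac{2n-1}{3}$.
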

\begin{proof}
By (\ref{v4e1}), we have
\begin{equation}\label{ae4}
\frac{1}{2}<\sum_{i=b_n}^{n}\frac{1}{i-2}=\sum_{i=b_n-2}^{n-2}\frac{1}{i}<\int_{b_n-\frac{5}{2}}^{n-\frac{3}{2}}\frac{dx}{x}=\log\left(\frac{n-\frac{3}{2}}{b_n-\frac{5}{2}}\right)
\end{equation}
and
\begin{equation}\label{ae5}
\frac{1}{2}\ge\sum_{i=b_n+1}^{n}\frac{1}{i-2}=\sum_{i=b_n-1}^{n-2}\frac{1}{i}>\int_{b_n-1}^{n-1}\frac{dx}{x}=\log\left(\frac{n-1}{b_n-1}\right).
\end{equation}
By (\ref{ae4}), we have $b_n<\frac{n-\frac{3}{2}}{\sqrt{e}}+\frac{5}{2}$; and from (\ref{ae5}), $b_n>\frac{n-1}{\sqrt{e}}+1$, establishing (\ref{ae7}). Since $\frac{n-\frac{3}{2}}{\sqrt{e}}+\frac{5}{2}<\frac{2n-1}{3}$ and $\frac{n-1}{\sqrt{e}}+1>\frac{n+5}{2}$ (for $n\ge32$), we have $\frac{n+5}{2}<b_n<\frac{2n-1}{3}$. The proof is complete.
\end{proof}

From (\ref{v4e1}) and (\ref{v4e2}), we have
\begin{align*}
(b_n-2)(n-2)&=\frac{(b_n-2)(2n-4)}{2}\\
&<u_n=2n-4+(b_n-2)(2n-4)\sum_{i=b_n+1}^n\frac{1}{i-2}\\
&\le 2n-4+\frac{(b_n-2)(2n-4)}{2}=b_n(n-2),
\end{align*}
i.e.
\begin{equation}\label{ae6}
(b_n-2)(n-2)<u_n\le b_n(n-2).
\end{equation}

\begin{rem}
The assumption of $n\ge32$ is needed for Lemmas \ref{sl4.2}--\ref{sl4.7} since the following proofs of the lemmas rely on $(\ref{ae8})$.
\end{rem}


\begin{proof}[\bf Proof of Lemma \ref{sl4.2}]
(i) Note ({\it cf.} Remark 2.1) that  $a_n=\lceil x_0\rceil<x_0+1$ where $x_0$ is the smaller root of $f_n(x)=0$. We now  show $f_n(b_n-1)<0$ (which implies that  $a_n<x_0+1<(b_n-1)+1=b_n$). We have
\begin{align*}
f_n(b_n-1)&=3(b_n-1)^2-(1+4n)(b_n-1)+(n-2)b_n+2(n+1)+u_n\\
&\le3(b_n-1)^2-(1+4n)(b_n-1)+(n-2)b_n+2(n+1)+b_n(n-2)\;\;\mbox{(by (\ref{ae6}))}\\
&=(b_n-3)\left[3b_n-(2n+2)\right]<0\;\;\mbox{(by (\ref{ae8})).}
\end{align*}
This proves (i).

(ii) Note that
\begin{align*}
f_n(b_n)&\le3b_n^2-(1+4n)b_n+(n-2)b_n+2(n+1)+b_n(n-2)\\
&=(b_n-1)\left[3b_n-(2n+2)\right]<0\;\;\mbox{(by (\ref{ae8})).}
\end{align*}
This proves that $b_n<y_0$.

(iii) By (\ref{ae8}) and (ii), $y_0>b_n>\frac{n+5}{2}>\frac{n+4}{3}$. We now show  $f_n\left(\frac{n+4}{3}\right)>0$
(which implies that $\frac{n+4}{3}<x_0\le \lceil x_0 \rceil =a_n$). By (\ref{ae6}),
\begin{align*}
f_n\left(\frac{n+4}{3}\right)&=-n^2-3n+4+(n-2)b_n+2(n+1)+u_n\\
&>-n^2-3n+4+(n-2)b_n+2(n+1)+(b_n-2)(n-2)\;\;\mbox{(by (\ref{ae6}))}\\
&=(n-2)\left(2b_n-(n+5)\right)>0\;\;\mbox{(by (\ref{ae8})).}
\end{align*}
The proof is complete.
\end{proof}

\begin{proof}[\bf Proof of Lemma \ref{sl4.4}]
By Lemma \ref{sl4.2}, $a_n<b_n$. (i) Let
\begin{align*}
Q_{i}&=\{X_\ell\neq2\;\mbox{for}\;a_n\le\ell\le i-1,X_i=2\},\; a_n\le i\le b_n-1;\\
Q'_{i}&=\{X_\ell\neq2\;\mbox{for}\;a_n\le\ell\le b_n-1,X_\ell\neq2,3\;\mbox{for}\;b_n\le\ell\le i-1,X_i=2\},\;i\ge b_n;\\
\mbox{and}\;\;Q''_{i}&=\{X_\ell\neq2\;\mbox{for}\;a_n\le\ell\le b_n-1,X_\ell\neq2,3\;\mbox{for}\;b_n\le\ell\le i-1,X_i=3\},\;i\ge b_n.
\end{align*}
Since $X_\ell$ is uniformly distributed over $\{1,2,\dots,\ell\}$, the $X_\ell's$ are independent  and
$R_i$ is conditionally independent of $X_1,\dots,X_{i-1}$ given $X_i$, we have
\begin{align*}
P(Q_{i})=\frac{(a_n-1)}{i(i-1)},\; &P(R_i=3|Q_{i})=y_i(2)\;\;\mbox{for}\;\;a_n\le i\le b_n-1, \\ P(Q'_{i})=P(Q''_{i})=\frac{(a_n-1)(b_n-2)}{i(i-1)(i-2)},\; &P(R_i=3|Q'_{i})=y_i(2),\; P(R_i=3|Q''_{i})=y_i(3),\;\;\mbox{for}\;\;i\ge b_n.
\end{align*}
Thus, by (\ref{e15}) and (\ref{v4e2}), for $j<a_n$,
\begin{align}
h_j(x_j)&=\sum_{i=a_n}^{n}P(R_i=3\;\mbox{and the $i$-th candidate is selected under}\;\tau_{3,n})\notag\\
&=\sum_{i=a_n}^{b_n-1}P(Q_{i})P(R_i=3|Q_{i})+\sum_{i=b_n}^{n}\Big[P(Q'_{i})P(R_i=3|Q'_{i})+P(Q''_{i})P(R_i=3|Q''_{i})\Big]\notag\\
&=\sum_{i=a_n}^{b_n-1}\frac{(a_n-1)}{i(i-1)}y_{i}(2)+\sum_{i=b_n}^{n}\left[\frac{(a_n-1)(b_n-2)}{i(i-1)(i-2)}\left(y_i(2)+y_i(3)\right)\right]\notag\\
&=\frac{a_n-1}{n(n-1)(n-2)}\left[\sum_{i=a_n}^{b_n-1}2(n-i)+(b_n-2)\sum_{i=b_n}^{n}\frac{2n-i-2}{i-2}\right]\notag\\
&=\frac{a_n-1}{n(n-1)(n-2)}\Bigg[(2n-a_n-b_n+1)(b_n-a_n)-(b_n-2)(n-b_n+1)\notag\\
&\hspace*{4cm}\left.+(b_n-2)(2n-4)\sum_{i=b_n}^{n}\frac{1}{i-2}\right]\notag\\
&=\frac{(a_n-1)\left[a_n^2-(1+2n)a_n+(n-2)b_n+2(n+1)+u_n\right]}{n(n-1)(n-2)}=:c_n\label{ae9}.
\end{align}
This proves (i) for $j<a_n$. The other cases can be treated similarly.

(ii) By (i), for $j<a_n-1$, $h_{j+1}(i)$ does not depend on $i$, so that $\frac{1}{j+1}\sum_{i=1}^{j+1}h_{j+1}(i)=c_n$.
To establish the identity for $j=a_n-1$, we have by (i) that $h_{a_n}(2)=y_{a_n}(2)$ and
$$
h_{a_n}(i)=\frac{a_n\left(a_n^2+(1-2n)a_n+(n-2)b_n+2+u_n\right)}{n(n-1)(n-2)}\;\;\mbox{for $i\neq2$ with $1\le i\le a_n$}.
$$
So,
\begin{align*}
\frac{1}{a_n}\sum_{i=1}^{a_n}h_{a_n}(i)&=\frac{1}{a_n}\left\{y_{a_n}(2)+(a_n-1)\left[\frac{a_n\left(a_n^2+(1-2n)a_n+(n-2)b_n+2+u_n\right)}{n(n-1)(n-2)}\right]\right\}\\
&=\frac{1}{a_n}\left\{\frac{2a_n(a_n-1)(n-a_n)}{n(n-1)(n-2)}+(a_n-1)\left[\frac{a_n\left(a_n^2+(1-2n)a_n+(n-2)b_n+2+u_n\right)}{n(n-1)(n-2)}\right]\right\}\\
&=\frac{(a_n-1)\left[a_n^2-(1+2n)a_n+(n-2)b_n+2(n+1)+u_n\right]}{n(n-1)(n-2)}=c_n.
\end{align*}
This proves (ii) for the case $j<a_n$. The other cases can be treated similarly.
\end{proof}

\begin{proof}[\bf Proof of Lemma \ref{sl4.5}]
Since, by Lemma \ref{sl4.4}(ii), $\frac{1}{j+1}\sum_{i=1}^{j+1}h_{j+1}(i)=c_n$ for $j<a_n$ where $c_n$ is defined in (\ref{ae9}), we need to show \begin{equation}\label{se4.4}
\max\{y_j(i):i=1,2,3,\;j<a_n\}<c_n,
\end{equation}
where $y_j(i)$ is given in (\ref{e15}).
Since $y_j(2)>y_j(3)$ if and only if $2(n-j)>j-2$ (i.e. $j<\frac{2n+2}{3}$) and, since by Lemma \ref{sl4.2}(i) and (\ref{ae8}), $a_n<b_n<\frac{2n-1}{3}$, we have $y_j(2)>y_j(3)$ for $j<a_n$, implying that
\begin{equation}\label{se4.5}
\max_{j<a_n}y_j(2)>\max_{j<a_n}y_j(3).
\end{equation}

Noting that $y_j(1)\ge y_{j+1}(1)$ if and only if $j\ge\frac{n-2}{3}$, we have
\begin{equation*}
\max_{1\le j\le n}y_j(1)=y_{\lceil\frac{n-2}{3}\rceil}(1)\le y_{\lceil\frac{n-2}{3}\rceil+1}(2),
\end{equation*}
where the inequality is due to the fact that $y_j(1)\le y_{j+1}(2)$ for $j\ge(n-2)/3$.
By Lemma \ref{sl4.2}(iii), $a_n>\frac{n+4}{3}>\lceil\frac{n-2}{3}\rceil+1$. So,
\begin{equation}\label{se4.6}
\max_{1\le j\le n}y_j(1)=y_{\lceil\frac{n-2}{3}\rceil}(1)\le y_{\lceil\frac{n-2}{3}\rceil+1}(2)\le\max_{j<a_n}y_j(2).
\end{equation}
Moreover, $y_j(2)\le y_{j+1}(2)$ if and only if $j\le\lfloor\frac{2n-1}{3}\rfloor$, which together with $a_n<\frac{2n-1}{3}$ implies that
\begin{equation}\label{se4.7}
\max_{j<a_n}y_j(2)=y_{a_n-1}(2).
\end{equation}
In view of (\ref{se4.5}), (\ref{se4.6}) and (\ref{se4.7}), (\ref{se4.4}) holds if we can show that
\begin{equation*}
y_{a_n-1}(2)<c_n,
\end{equation*}
i.e.
$$
3a_n^2-(4n+7)a_n+(n-2)b_n+6(n+1)+u_n>0,
$$
which is equivalent to $f_n(a_n-1)>0$. This holds by (\ref{v4e4}). The proof is complete.
\end{proof}

\begin{proof}[\bf Proof of Lemma \ref{sl4.6}]
(i) Note that
\begin{align*}
\frac{n(n-1)(n-2)}{j}\left[y_j(2)-\frac{1}{j+1}\sum_{i=1}^{j+1}h_{j+1}(i)\right]&=2(j-1)(n-j)-j^2-(1-2n)j-(n-2)b_n-2-u_n\\
&=-3j^2+(1+4n)j-(n-2)b_n-2(n+1)-u_n\\
&=-f_n(j)\ge0,
\end{align*}
where the inequality holds since $f_n(j)\le0$ for $x_0\le a_n\le j<b_n<y_0$ where $x_0$ and $y_0$ denote the two roots of $f_n(x)=0$.

(ii) Note that
\begin{align*}
&\frac{n(n-1)(n-2)}{j}\left[y_j(1)-\frac{1}{j+1}\sum_{i=1}^{j+1}h_{j+1}(i)\right]\\
=&(n-j-1)(n-j)-j^2-(1-2n)j-(n-2)b_n-2-u_n\\
=&n^2-n-(n-2)b_n-2-u_n\\
<&n^2-n-(n-2)b_n-2-(b_n-2)(n-2)\;\;\mbox{(by (\ref{ae6}))}\\
=&(n-2)(n+3-2b_n)<0\;\;\mbox{(by (\ref{ae8}))}.
\end{align*}
This proves (ii).

(iii) Note that
\begin{align*}
\frac{n(n-1)(n-2)}{j}\left[y_j(3)-\frac{1}{j+1}\sum_{i=1}^{j+1}h_{j+1}(i)\right]&=(j-1)(j-2)-j^2-(1-2n)j-(n-2)b_n-2-u_n\\
&=(n-2)(2j-b_n)-u_n\\
&<(n-2)(2j-b_n)-(b_n-2)(n-2)\;\mbox{(by (\ref{ae6}))}\\
&=2(n-2)(j+1-b_n)\le0,
\end{align*}
where the last inequality follows since $j\le b_n-1$. The proof is complete.
\end{proof}

\begin{proof}[\bf Proof of Lemma \ref{sl4.7}]
We claim that
\begin{align}
&\frac{j-1}{n-j}\sum_{i=j+1}^{n}\frac{1}{i-2}\;\;\mbox{is increasing in}\;\;2\le j<n;\label{ae10}\\
\mbox{and}\;\;&\frac{1}{n-j}\sum_{i=j+1}^{n}\frac{1}{i-2}\;\;\mbox{is decreasing in}\;\;2\le j<n.\label{ae11}
\end{align}
Note that for $j=2,\dots, n-2$,
\begin{align*}
\frac{j-1}{n-j}\sum_{i=j+1}^{n}\frac{1}{i-2}-\frac{j}{n-j-1}\sum_{i=j+2}^{n}\frac{1}{i-2}&=\frac{1}{n-j}-\frac{n-1}{(n-j)(n-j-1)}\sum_{i=j+2}^{n}\frac{1}{i-2}\\
&=\frac{n-1}{n-j}\left(\frac{1}{n-1}-\frac{1}{n-j-1}\sum_{i=j+2}^{n}\frac{1}{i-2}\right)<0,
\end{align*}
establishing  (\ref{ae10}). A similar argument yields (\ref{ae11}).

(i) By (\ref{e15}) and Lemma \ref{sl4.4}(ii), for $b_n\le j\le n-1$,
\begin{align*}
&\frac{n(n-1)(n-2)}{j}\left[y_j(1)-\frac{1}{j+1}\sum_{i=1}^{j+1}h_{j+1}(i)\right]\\
=&(n-j-1)(n-j)-(j-1)\left[(2n-4)\sum_{i=j+1}^{n}\frac{1}{i-2}-(n-j)\right]\\
=&(n-j)(n-2)\left[1-\frac{2(j-1)}{n-j}\sum_{i=j+1}^{n}\frac{1}{i-2}\right]\\
\le&(n-j)\left[1-\frac{2(b_n-1)}{n-b_n}\sum_{i=b_n+1}^{n}\frac{1}{i-2}\right]\;\;\mbox{(by (\ref{ae10}))}\\
<&(n-j)\left[1-\frac{2(b_n-1)}{n-2}\right]\\
<&0\;\;\mbox{(since $b_n>\frac{n+5}{2}$ by (\ref{ae8}))}.
\end{align*}
This proves (i).

(ii) By (\ref{e15}) and Lemma \ref{sl4.4}(ii), for $b_n\le j\le n-1$,
\begin{align*}
\frac{n(n-1)(n-2)}{j(j-1)}\left[y_j(2)-\frac{1}{j+1}\sum_{i=1}^{j+1}h_{j+1}(i)\right]&=3(n-j)-(2n-4)\sum_{i=j+1}^{n}\frac{1}{i-2}\\
&=(n-j)\left[3-\frac{2n-4}{n-j}\sum_{i=j+1}^{n}\frac{1}{i-2}\right]\\
&\ge(n-j)\left[3-\frac{2n-4}{n-b_n}\sum_{i=b_n+1}^{n}\frac{1}{i-2}\right]\;\;\mbox{(by (\ref{ae11}))}\\
&\ge(n-j)\left[3-\frac{n-2}{n-b_n}\right]\;\mbox{(by (\ref{v4e1}))}\\
&>0\;\;\mbox{(since $b_n<(2n-1)/3$ by (\ref{ae8}))}.
\end{align*}
This proves (ii).

(iii) By (\ref{e15}) and Lemma \ref{sl4.4}(ii), for $b_n\le j\le n-1$,
\begin{align*}
\frac{n(n-1)(n-2)}{j(j-1)}\left[y_j(3)-\frac{1}{j+1}\sum_{i=1}^{j+1}h_{j+1}(i)\right]&=n-2-(2n-4)\sum_{i=j+1}^{n}\frac{1}{i-2}\\
&=(n-2)\left[1-2\sum_{i=j+1}^{n}\frac{1}{i-2}\right]\\
&\ge(n-2)\left[1-2\sum_{i=b_n+1}^{n}\frac{1}{i-2}\right]\\
&\ge0\;\;\mbox{(by (\ref{v4e1}))}.
\end{align*}
The proof is complete.
\end{proof}

\begin{proof}[\bf Proof of (\ref{b5})--(\ref{b6})]
It follows immediately from Lemma \ref{sl4.1} that $d_2=1/\sqrt{e}$. Let $x_0$ be the smaller root of $f_n(x)=0$, i.e.
\begin{align}
x_0:&=\frac{(1+4n)-\sqrt{(1+4n)^{2}-12[(n-2)b_n+2(n+1)+u_n]}}{6}\notag\\
&=\frac{2[(n-2)b_n+2(n+1)+u_n]}{1+4n+\sqrt{(1+4n)^{2}-12[(n-2)b_n+2(n+1)+u_n]}}.\label{b7}
\end{align}
Since $\frac{b_n}{n}\to d_2=1/\sqrt{e}$ and $\sum_{i=b_n}^{n}\frac{1}{i-2}\to\int_{1/\sqrt{e}}^{1}\frac{dx}{x}=\frac{1}{2}$ as $n\to\infty$,
\begin{equation}\label{b8}
\frac{u_n}{n^{2}}=\frac{(b_n-2)(2n-4)}{n^{2}}\sum_{i=b_n}^{n}\frac{1}{i-2}\to d_2\;\;\mbox{as}\;\;n\to\infty.
\end{equation}
By (\ref{b7}), (\ref{b8}) and $a_n=\lceil x_0\rceil$, we have
\begin{equation*}
d_1=\lim_{n\to\infty}\frac{a_n}{n}=\lim_{n\to\infty}\frac{x_0}{n}=\frac{2d_2}{2+\sqrt{4-6d_2}}=\frac{2}{2\sqrt{e}+\sqrt{4e-6\sqrt{e}}},
\end{equation*}
proving (\ref{b5}). By Lemma \ref{sl4.4}(i),
$$
p(3,n)=h_1(1)=\frac{(a_n-1)[a^{2}_{n}-(1+2n)a_{n}+(n-2)b_n+2(n+1)+u_n]}{n(n-1)(n-2)},
$$
which together with (\ref{b6}) and (\ref{b8}) yields
\begin{equation*}
p(3,\infty)=\lim_{n\to\infty}p(3,n)=d_1(d^{2}_{1}-2d_{1}+2d_{2})=2d^{2}_{1}(1-d_{1})=\frac{8\left(2\sqrt{e}-2+\sqrt{4e-6\sqrt{e}}\right)}{\left(2\sqrt{e}+\sqrt{4e-6\sqrt{e}}\right)^{3}},
\end{equation*}
proving (\ref{b6}).
\end{proof}

\section{A computer program in Mathematica for verification of Theorem \ref{st4.1} for $\mathbf{3\le n\le 31}$}

\newcommand{\mathsym}[1]{{}}
\newcommand{\unicode}[1]{{}}
\newcounter{mathematicapage}
\noindent
\(\pmb{\text{Clear}[f,u,n,j,x];}\\
\pmb{\text{For}[n=3,n<32,n\text{++},}\\
\pmb{u[\text{n$\_$},\text{j$\_$},\text{x$\_$}]\text{:=}\text{Which}\left[x==1,\frac{(n-j+1)*(j-2)*(j-1)}{n*(n-1)*(n-2)},x==2,\frac{2*(n-j+1)*(n-j)*(j-1)}{n*(n-1)*(n-2)},\right.}\\
\pmb{\left.,x==3,\frac{(n-j+1)*(n-j)*(n-j-1)}{n*(n-1)*(n-2)},\text{True},0\right];}\\
\pmb{\text{For}[j=1,j\leq  n,j\text{++},}\\
\pmb{\text{For}[x=1,x\leq n,x\text{++},}\\
\pmb{f[n,j,x]=\text{If}\left[j>1,\text{Max}\left[u[n,j,x],\frac{1}{n-j+2}*\left(\sum _{i=1}^{n-j+2} f[n,j-1,i]\right)\right],\text{Which}[x==3,1,x\neq
3,0]\right]}\\
\pmb{]\;\;\text{(*This sets the values backwards*)}}\\
\pmb{]}\\
\pmb{]}\)\\[4mm]

\noindent\(\pmb{\text{Clear}[y,v,b,n];}\\
\pmb{y[\text{n$\_$},\text{j$\_$},\text{x$\_$}]\text{:=}u[n,n+1-j,x];\;\text{(*Define the conditional probability y*)}}\\
\pmb{v[\text{n$\_$},\text{j$\_$},\text{x$\_$}]\text{:=}f[n,n+1-j,x];\;\text{(*Define the value function*)}}\\
\pmb{b[3]=3;\;\text{(*Define the threshold $b_n$*)}}\\
\pmb{\text{For}[n=4,n<32,n\text{++},}\\
\pmb{\text{For}\left[i=2,i<n,i\text{++},\text{If}\left[\sum _{k=i+1}^n \frac{1}{k-2}\leq \frac{1}{2},\;i\;\;\&\&\;\;\text{Break}[]\right]\right];}\\
\pmb{b[n]=i}\\
\pmb{]}\)\\[4mm]

\noindent\(\pmb{\text{Clear}[a,n,j];}\\
\pmb{a[\text{n$\_$}]\text{:=}\text{Ceiling}\left[\frac{\left(1+4n-\sqrt{(1+4n)^2-12\left((n-2)b[n]+2(n+1)+(b[n]-2)(2n-4)\sum _{j=b[n]}^n \frac{1}{j-2}\right)}\right)}{6}\right];\text{(*Define
the threshold $a_n$*)}}\\
\pmb{}\\
\pmb{\text{For}[n=3,n<32,n\text{++},}\\
\pmb{\text{If}[a[n]-b[n]>0,\text{Print}[n]\;\&\&\; \text{Break}[]]\;\;\text{(*}\text{This verifies}\;\text{that}\; \;\text{$a_n$}<\text{$b_n$}\;\text{for}\;3\leq  n\leq  31\text{*)}}\\
\pmb{]}\)\\[4mm]

\newpage
\noindent\(\pmb{\text{Clear}[i,j,n,x];}\\
\pmb{\text{For}[n=3,n<32,n\text{++},}\\
\pmb{\text{For}[j=1,j<a[n],j\text{++},}\\
\pmb{\text{For}[x=1,x\leq  j,x\text{++},}\\
\pmb{\text{If}\left[y[n,j,x]\ge\frac{1}{j+1}*\sum _{i=1}^{j+1} v[n,j+1,i],\text{Print}[\{n,j,x\}]\;\&\&\; \text{Break}[]\right]}\\
\pmb{]}\\
\pmb{]}\\
\pmb{]\;\;\text{(*}\text{This verifies}\;\text{Lemma}\;2.3\; \text{for}\;3\leq  n\leq  31\text{*)}}\)\\[4mm]

\noindent\(\pmb{\text{Clear}[i,j,n];}\\
\pmb{\text{For}[n=3,n<32,n\text{++},}\\
\pmb{\text{For}[j=a[n],j<b[n],j\text{++},}\\
\pmb{\text{If}\left[y[n,j,2]<\frac{1}{j+1}*\sum _{i=1}^{j+1} v[n,j+1,i]\left\| y[n,j,1]\geq  \frac{1}{j+1}*\sum _{i=1}^{j+1} v[n,j+1,i]\right\|\right.}\\
\pmb{y[n,j,3]\geq\frac{1}{j+1}*\sum _{i=1}^{j+1} v[n,j+1,i],\text{Print}[\{n,j,x\}]\;\&\&\;\text{Break}[]]}\\
\pmb{]}\\
\pmb{]\;\text{(*}\text{This verifies}\;\text{Lemma}\;2.4 \;\text{for}\;3\leq  n\leq  31\text{*)}}\)\\[4mm]

\noindent\(\pmb{\text{Clear}[i,j,n];}\\
\pmb{\text{For}[n=3,n<32,n\text{++},}\\
\pmb{\text{For}[j=b[n],j<n,j\text{++},}\\
\pmb{\text{If}\left[y[n,j,1]\geq  \frac{1}{j+1}*\sum _{i=1}^{j+1} v[n,j+1,i]\left\| y[n,j,2]< \frac{1}{j+1}*\sum _{i=1}^{j+1} v[n,j+1,i] \right\|\right.}\\
\pmb{y[n,j,3]< \frac{1}{j+1}*\sum _{i=1}^{j+1} v[n,j+1,i],\text{Print}[\{n,j\}]\;\&\&\;\text{Break}[]]}\\
\pmb{]}\\
\pmb{]\;\text{(*}\text{This verifies}\;\text{Lemma}\;2.5\; \text{for}\;3\leq  n\leq  31\text{*)}}\)

\section*{Acknowledgements}
\hspace*{18pt}The authors gratefully acknowledge support from the Ministry of Science and Technology of Taiwan, ROC.




\end{document}